\documentclass[12pt]{amsart}
\usepackage{geometry}
\usepackage{amssymb}
\usepackage{amsmath}
\usepackage{amsfonts}
\usepackage{amsthm}
\usepackage{mathrsfs}
\usepackage{graphicx}
\usepackage[table]{xcolor}
\usepackage{hyperref}
\usepackage[all]{xy}
\usepackage[active]{srcltx}
\usepackage{url}
\usepackage[active]{srcltx}
\usepackage{tabularx}

\setlength{\extrarowheight}{3pt}
\DeclareMathOperator{\SL}{SL}
\DeclareMathOperator{\Pic}{Pic}

\DeclareMathOperator{\Tr}{Tr}

\DeclareMathOperator{\Gal}{Gal}
\DeclareMathOperator{\Jac}{Jac}

\DeclareMathOperator{\End}{End}

\DeclareMathOperator{\ord}{ord}
\DeclareMathOperator{\sign}{sign}
\DeclareMathOperator{\cond}{cond}

\DeclareMathOperator{\Disc}{disc}
\def\id#1{{\mathfrak{#1}}}      
\def \QQ{\mathbb{Q}}
\def \AA{\mathbb{A}}

\def \ZZ{\mathbb{Z}}

\def\On{\mathcal{O}}
\def\Om{\mathscr{O}}

\def \CC{\mathbb{C}}
\def\<#1>{{\left\langle{#1}\right\rangle}}

\let\kro\dkro


\newcommand{\smtx}[4]{\left(\begin{smallmatrix}#1&#2\\#3&#4\end{smallmatrix}\right)}
\newcommand{\cO}{\mathcal{O}}
\newcommand{\cH}{\mathcal{H}}
\newcommand{\tns}{\otimes}
\newcommand{\Div}{\operatorname{Div}}

\theoremstyle{plain}
\newtheorem{thm}{Theorem}[section]
\newtheorem{prop}[thm]{Proposition}
\newtheorem{coro}[thm]{Corollary}
\newtheorem{lemma}[thm]{Lemma}

\theoremstyle{remark}
\newtheorem{rem}[thm]{Remark}

\newtheorem*{rem*}{Remark}
\theoremstyle{definition}
\newtheorem{obs/res}[thm]{Observation}

\synctex=1

\begin{document}
\title[Ramified Heegner points]{On Heegner Points for primes of additive reduction ramifying in the base field}

\author{Daniel Kohen}
\address{Departamento de Matem\'atica, Facultad de Ciencias Exactas y Naturales, Universidad de Buenos Aires and IMAS, CONICET, Argentina}
\email{dkohen@dm.uba.ar}
\thanks{DK was partially supported by a CONICET doctoral fellowship}

\author{Ariel Pacetti}
\address{Departamento de Matem\'atica, Facultad de Ciencias Exactas y Naturales, Universidad de Buenos Aires and IMAS, CONICET, Argentina}
\email{apacetti@dm.uba.ar}
\thanks{AP was partially supported by CONICET PIP 2010-2012 11220090100801, ANPCyT PICT-2013-0294 and  UBACyT 2014-2017-20020130100143BA}

\address{University of Warwick, Coventry, UK}
\email{m.masdeu@warwick.ac.uk}
\thanks{MM was supported by EU H2020-MSCA-IF-655662}

\dedicatory{with an Appendix by Marc Masdeu}

\keywords{Heegner points}
\subjclass[2010]{Primary: 11G05, Secondary: 11G40}
\begin{abstract}
  Let $E$ be a rational elliptic curve, and $K$ be an imaginary
  quadratic field. In this article we give a method to construct
  Heegner points when $E$ has a prime bigger than $3$ of additive
  reduction ramifying in the field $K$. The ideas apply to more
  general contexts, like constructing Darmon points attached to real
  quadratic fields which is presented in the appendix.
\end{abstract}				
\maketitle

\section*{Introduction}

Heegner points play a crucial role in our nowadays understanding of
the Birch and Swinnerton-Dyer conjecture, and are the only instances
where non-torsion points can be constructed in a systematic way for
elliptic curves over totally real fields (assuming some still unproven
modularity hypotheses). Although Heegner points were heavily studied
for many years, most applications work under the so
called ``Heegner hypothesis'' which gives a sufficient condition
for an explicit construction to hold. In general, if $E$ is an
elliptic curve over a number field $F$ and $K/F$ is any quadratic
extension, the following should be true.

\medskip

\noindent{\bf Conjecture:} If $\sign(E,K)=-1$, then there is a non-trivial Heegner system attached to $(E,K)$.

\medskip

This is stated as Conjecture 3.16 in \cite{Dar04}. When $F=\QQ$, $E$
is an elliptic curve of square-free conductor $N$ and $K$ is an
imaginary quadratic field whose discriminant is prime to $N$, the
conjecture is proven in Darmon's book (\cite{Dar04}) using both the
modular curve $X_0(N)$ and other Shimura curves. The hypotheses on $N$
and $K$ were relaxed by Zhang in \cite{ZH}, who proved the
conjecture under the assumption that if a prime $p$ ramifies in $K$ then
$p^2 \nmid N$.

When the curve is not semistable at some prime $p$ the situation is
quite more delicate. An interesting phenomenon is that in this
situation, the local root number at $p$ has no relation with the
factorization of $p$ in $K$. Still the problem has a positive answer
in full generality, due to the recent results of \cite{yuan2013gross}, where
instead of working with the classical group $\Gamma_0(N)$, they deal
with more general arithmetic groups. The purpose of this article is to give
``explicit'' constructions of Heegner points for pairs $(E,K)$ as
above. Here by explicit we mean that we can compute numerically the theoretical
points in the corresponding ring class field, which restricts
us to working only with unramified quaternion algebras (since the
modular parametrization is hard to compute for Shimura curves). For
computational simplicity we will also restrict the base field to
the field of rational numbers.

Let $\chi:K^\times \backslash K^\times_{\AA}\to \CC^\times$ be a
finite order anticyclotomic Hecke character, and $\eta$ be the character
corresponding to the quadratic extension $K/\QQ$. In order to
construct a Heegner point attached to $\chi$ in a matrix algebra, for
each prime number $p$ the following condition must hold
\[
\epsilon(\pi_p,\chi_p)=\chi_p(-1) \eta_p(-1),
\]
where $\pi$ is the automorphic representation attached to $E$, and
$\epsilon(\pi_p,\chi_p)$ is the local root number of $L(s,\pi,\chi)$
(see \cite[Section 1.3.2]{yuan2013gross}). If we impose the extra condition
$\gcd(\cond(\chi),N\cond(\eta))=1$, then at
primes dividing the conductor of $E/K$ the equation becomes
\[
\varepsilon_p(E/K)=\eta_p(-1),
\]
where $\varepsilon_p(E/K)$ is the local root number at $p$ of the base change
of $E$ to $K$ (it is equal to $\varepsilon_p(E) \varepsilon_p(E\otimes
\eta)$). This root number is easy to compute if $p \neq 2,3$ (see \cite{Pacetti}):
\begin{itemize}
\item If $p$ is unramified in $K$, then $\eta_p(-1)=1$ and
\[
\varepsilon_p(E/K)=\begin{cases}
1 & \text{ if } v_p(N)=0,\\
\kro{p}{\Disc(K)} & \text{ if } v_p(N)=1,\\
1 & \text{ if } v_p(N)=2,
\end{cases}
\]
where $v_p(N)$ denotes the valuation of $N$ at $p$.
\item If $p$ is ramified in $K$ then $\eta_p(-1)=\kro{-1}{p}$ and
\[
\varepsilon_p(E/K)= \kro{-1}{p} \cdot
\begin{cases}
1 & \text{ if } v_p(N)=0 ,\\
\varepsilon_p(E) & \text{ if } v_p(N)=1 ,\\
\varepsilon_p(E_p) & \text{ if } v_p(N_{E_p})=1,\\
1 & \text{ if }E \text{ is P.S.} ,\\
-1 & \text{ if } E \text{ is S.C.},\\
\end{cases}
\]
where $E_p$ denotes the quadratic twist of $E$ by the unique quadratic
extension of $\QQ$ unramified outside $p$; $E$ is P.S. if the attached
automorphic representation is a ramified principal series (which is
equivalent to the condition that $E$ acquires good reduction over an
abelian extension of $\QQ_p$) and $E$ is S.C. if the attached
automorphic representation is supercuspidal at $p$ (which is
equivalent to the condition that $E$ acquires good reduction over a
non-abelian extension).  
\end{itemize}

Let $E/\QQ$ be an elliptic curve. We call it Steinberg at a prime $p$
if $E$ has multiplicative reduction at $p$ (and denote it by St.). In
Table~\ref{table:signs} we summarize the above equations for
$p \neq 2,3$, where the sign corresponds to the product
$\varepsilon_p(E/K) \eta_p(-1)$.
\begin{table}[h]

\begin{tabular}{l|r|r|r}
& $p$ is inert & $p$ splits & $p$ ramifies\\
\hline
St & \cellcolor[gray]{0.9} $-1$ & \cellcolor[gray]{0.9} $1$ & \cellcolor[gray]{0.9}$\varepsilon_p(E)$\\
\hline
St $\otimes \chi_p$ &\cellcolor[gray]{0.7} $1$ & \cellcolor[gray]{0.9}$1$ & $\varepsilon_p(E_p)$\\
\hline
P.S. &\cellcolor[gray]{0.7} $1$ &\cellcolor[gray]{0.9} $1$ & $1$\\
\hline
Sc. &\cellcolor[gray]{0.7} $1$ &\cellcolor[gray]{0.9} $1$ & $-1$
\end{tabular}
\caption{Signs Table}
\label{table:signs}
\end{table}

Our goal is to give an explicit construction in all cases where
the local sign of Table~\ref{table:signs} equals $+1$. The cells
colored in light grey correspond to the classical construction, and
the ones colored with dark grey are considered in the article
\cite{Kohen}. In the present article we will consider the following cases:
\begin{itemize}
\item $E$ has additive but potentially multiplicative reduction, and  $\varepsilon_{p}(E_{p})=+1$.
\item $E$ has additive but potentially good reduction over an abelian extension.
\end{itemize}

\begin{rem*}
  The situation for $p=2$ and $p=3$ is more delicate, although most
  cases can be solved with the same ideas. For the rest of this
  article we assume $p>3$.
\end{rem*}
The strategy is to build an abelian variety related to $E/K$ (in general of
dimension greater than $1$) and use a classical Heegner construction
on such variety so that we can transfer the Heegner points back to our
original elliptic curve. To clarify the exposition, we start assuming
that there is only one prime $p$ ramifying in $K$ where our curve has
additive reduction, and every other prime $q$ dividing $N$ is split in
$K$. The geometric object we consider is the following:

\begin{itemize}
\item If $E$ has potentially multiplicative reduction, we consider the
  elliptic curve $E_p$ of conductor $N/p$ which is the quadratic twist
  of $E$ by the unique quadratic character ramified only at $p$.
\item If $E$ has potentially good reduction over an abelian extension, then we consider an
  abelian surface of conductor $N/p$, which is attached to a pair
  $(g,\bar{g})$, where $g$ is the newform of level $N/p$ corresponding
  to a twist of the weight $2$ modular form $E_f$ attached to $E$.
\end{itemize}
In both cases the classical Heegner hypothesis is satisfied
 (eventually for
dimension greater than one), and the resulting abelian varieties are
 isogenous to our curve or to a
product of the curve with itself over some field extension. Such
isogeny is the key to relate the classical construction to the new
cases considered. Each case has a different construction/proof (so
they will be treated separately), but both follow the same idea.  In
all cases considered we will construct points on
$(E(H_{c}) \otimes \CC)^{\chi}$. These points will be non-torsion if
an only if $L'(E/K, \chi ,1) \neq 0$ as expected by the results of
Gross-Zagier  \cite{GZ}  and Zhang \cite{ZH}.

Our construction is interesting on its own, and can be used to
move from a delicate situation to a not so bad one (reducing the
conductor of the curve at the cost of adding a character in some
cases). So, despite we focus on classical modular curves, the methods of this article can be easily applied to a wide variety of contexts, for example
more general Shimura curves.

In recent years, following a breakthrough idea of Darmon there has been a lot of work in the direction of defining and computing $p$-adic Darmon points, which are points defined over 
certain ring class fields of real quadratic extensions using $p$-adic methods. For references to this circle of ideas
the reader can consult \cite{Dar04}, \cite{darmon2001integration}, \cite{bertolini2009rationality}, \cite{MR2289868}. 
 These construction are mostly conjectural (but see \cite{bertolini2009rationality}), and there has been a lot of effort to explicitly compute $p$-adic approximations to these points in order to
 gather numerical evidence supporting these conjectures. The interested reader might consult \cite{darmon2006efficient}, \cite{MR2510743}, \cite{guitart2015elementary},
 \cite{MR3384519}, \cite{MR3418066}.

In order to illustrate the decoupling of our techniques from the algebraic origin of the points, in an appendix by Marc Masdeu it is shown how these can be applied to the computation of $p$-adic Darmon points.

The article is organized as follows: in the first section we treat the
case of a curve having potentially multiplicative reduction, and prove
the main result in such case. In the second section we prove our main
result in the case that we have potentially good reduction over an abelian extension. In the third section, we
explain how to extend the result to general conductors and in the fourth section we finish
the article with some explicit examples in the modular curves setting, including Cartan non-split curves, as in \cite{Kohen}. Lastly, we include the aforementioned appendix.

\bigskip

\noindent {\bf Acknowledgments:} We would like to thank Henri Darmon
for many comments and suggestions regarding the present article and
Marc Masdeu for his great help and contributions to this project.  We
would also like to thank the Special Semester ``Computational aspects
of the Langlands program'' held at ICERM for providing a great
atmosphere for working on this subject. Finally, we would like to thank the referee for the useful remarks.

\section{The potentially multiplicative case}
\label{sec:potmult}
Let $E/\QQ$ be an elliptic curve of conductor $p^2 \cdot m$ where $p$
is an odd prime and $gcd(p,m)=1$. Suppose that $E$ has potentially
multiplicative reduction at the prime $p$. Let $K$ be any imaginary
quadratic field satisfying the Heegner hypothesis at all the primes
dividing $m$ and such that $p$ is ramified in $K$.  Let
$p^{\ast}=\kro{-1}{p}p$ and let $E_p$ be the quadratic twist of $E$ by
$\QQ(\sqrt{p^\ast})$. We have an isomorphism $\phi:E_p \to E$ defined
over $\QQ(\sqrt{p^\ast})$. The elliptic curve $E_p$ has conductor
$p \cdot m$ and $\sign(E,K)=\sign(E_p,K)\varepsilon_p(E_p)$.

Recall that to have explicit constructions, we need to work with a
matrix algebra so we impose the condition $\varepsilon_p(E_p)=1$ (see
Table \ref{table:signs}).  Then, $\sign(E,K)=\sign(E_p,K)=-1$ and the
pair $(E_{p},K)$ satisfies the Heegner condition. Therefore, we can
find Heegner points on $E_{p}$ and map them to $E$ via $\phi$. More
precisely, let $c$ be a positive integer relatively prime to
$N \cdot \Disc(K)$ and let $H_{c}$ be the ring class field associated
to the order of conductor $c$ in the ring of integers of $K$. Let
$\chi : \Gal(H_{c}/K) \rightarrow \CC^{\times}$ be any character and
let $\chi_{p}$ be the quadratic character associated to
$\QQ(\sqrt{p^\ast})$ via class field theory.  Take a Heegner point
$P_{c} \in E_p(H_c)$ and consider the point

\[P^{\chi\chi_{p}}_{c}= \sum_{\sigma \in \Gal(H_{c}/K)} \bar{\chi}\bar{\chi_{p}}(\sigma)
P_{c}^{\sigma} \in (E_p(H_{c})\otimes \CC)^{\chi \chi_{p}}.\]

\begin{thm}
The point $\phi(P^{\chi\chi_{p}}_{c})$ belongs to $(E(H_{c}) \otimes
\CC)^{\chi}$ and it is non-torsion if and only if $L'(E/K,\chi,1)  \neq 0$.
\end{thm}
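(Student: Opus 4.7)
The overall strategy is to transport the classical Heegner construction on $(E_p, K)$---a pair that already falls under Zhang's relaxed Heegner hypothesis---over to $(E, K)$ via the twisting isomorphism $\phi : E_p \to E$, matching the character $\chi \chi_p$ on the $E_p$-side with $\chi$ on the $E$-side.

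I would first argue that $\chi_p$ can naturally be regarded as a character of $\Gal(H_c/K)$. Since $p$ ramifies in $K$, genus theory shows that $\QQ(\sqrt{p^\ast})$ sits inside the genus field of $K$, and hence inside the Hilbert class field, so inside $H_c$. Restricting the Dirichlet character $\chi_p$ (seen as a character of $G_\QQ$) to $G_K$ therefore yields a well-defined quadratic character of $\Gal(H_c/K)$, still denoted $\chi_p$, which makes the twisted sum $P_c^{\chi \chi_p}$ meaningful.

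The key computation is the Galois transformation of $\phi(P_c^{\chi \chi_p})$. By the defining cocycle relation of a quadratic twist, $\phi^\sigma = \chi_p(\sigma)\, \phi$ for every $\sigma \in G_\QQ$, so that for $Q \in E_p(H_c)$ and $\sigma \in \Gal(H_c/K)$ (lifted arbitrarily to $G_\QQ$, which is harmless since $Q$ is $H_c$-rational) one has $\phi(Q)^\sigma = \chi_p(\sigma)\, \phi(Q^\sigma)$. Applying this to each summand of $P_c^{\chi \chi_p}$ and re-indexing the sum by $\tau \mapsto \tau \sigma^{-1}$, the two appearances of $\chi_p(\sigma)$ multiply to $1$ because $\chi_p$ is quadratic, yielding
\[
\phi(P_c^{\chi \chi_p})^\sigma \;=\; \chi(\sigma)\, \phi(P_c^{\chi \chi_p}),
\]
which places $\phi(P_c^{\chi \chi_p})$ in $(E(H_c) \otimes \CC)^\chi$.

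For the non-triviality criterion, the relation $E \cong E_p \otimes \chi_p$ yields the $L$-function identity $L(E/K, \chi, s) = L(E_p/K, \chi \chi_p, s)$. The pair $(E_p, K)$ satisfies Zhang's hypothesis: the primes dividing $m$ split in $K$, and $v_p(\cond(E_p)) = 1$ since $E_p$ has conductor $p \cdot m$. Gross--Zagier--Zhang then gives that $P_c^{\chi \chi_p}$ is non-torsion iff $L'(E_p/K, \chi \chi_p, 1) \neq 0$, equivalently iff $L'(E/K, \chi, 1) \neq 0$; and since $\phi$ is an isomorphism of abelian varieties it preserves torsion, so the same characterization holds for $\phi(P_c^{\chi \chi_p})$. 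The main obstacle is the careful bookkeeping in the Galois computation above, where the two independent occurrences of $\chi_p$---one from the twist cocycle relation for $\phi$, one from the character sum defining $P_c^{\chi \chi_p}$---must cancel, which is precisely what forces one to twist $\chi$ by $\chi_p$ in the definition of the auxiliary point.
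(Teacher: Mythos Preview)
Your proposal is correct and follows essentially the same approach as the paper: both use genus theory to place $\QQ(\sqrt{p^\ast})$ inside $H_c$, the cocycle relation $\phi^\sigma = \chi_p(\sigma)\phi$, and Gross--Zagier--Zhang applied to $(E_p,K,\chi\chi_p)$ together with the $L$-function identity $L(E_p/K,\chi\chi_p,s)=L(E/K,\chi,s)$. The only cosmetic difference is that the paper rewrites $\phi(P_c^{\chi\chi_p})$ directly as $\sum_\sigma \bar\chi(\sigma)\,\phi(P_c)^\sigma$, which is visibly in the $\chi$-eigenspace, whereas you verify the eigenvector property $\phi(P_c^{\chi\chi_p})^\sigma = \chi(\sigma)\,\phi(P_c^{\chi\chi_p})$ via re-indexing; these are equivalent computations.
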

\begin{proof}
The key point is that since $p\mid \Disc(K)$,
$\QQ(\sqrt{p^\ast}) \subset H_c$ (by genus theory). For $\sigma \in \Gal(\bar{\QQ}/\QQ)$,
we have $\phi^{\sigma}= \chi_{p}(\sigma)\phi$, hence,  
\[
\phi(P^{\chi \chi_{p}}_{c})=\sum_\sigma \bar{\chi}(\sigma) \phi(P_{c})^{\sigma} \in (E(H_{c}) \otimes
\CC)^{\chi}.
\]
Finally note that by the Theorems of Gross-Zagier \cite{GZ} and Zhang \cite{ZH} the point $P^{\chi\chi_{p}}_{c}$ is non-torsion if and only if
$L'(E_{p} /K,\chi \chi_{p},1)= L'(E/K,\chi,1)  \neq 0$. Since $\phi$ is an isomorphism the result follows.

\end{proof}

\section{The potentially good case (over an abelian extension)}
Let $E/\QQ$ be an elliptic curve of conductor $p^2 \cdot m$ where $p$
is an odd prime and $gcd(p,m)=1$. For simplicity assume that $E$ does
not have complex multiplication.  We recall some generalities on
elliptic curves with additive but potentially good reduction over an
abelian extension. Although such results can be stated and explained
using the theory of elliptic curves, we believe that a representation
theoretical approach is more general and clear. Let $f_E$ denote the
weight $2$ newform corresponding to $E$.

Let $W(\QQ_p)$ be the Weil group of $\QQ_p$, and $\omega_1$ be the
unramified quasi-character giving the action of $W(\QQ_p)$ on the
roots of unity. Using the normalization given by Carayol
(\cite{Carayol}), at the prime $p$ the Weil-Deligne representation corresponds to a principal series representation
on the automorphic side and to a representation
\[
\rho_p(f)=\psi \oplus \psi^{-1} \omega_1^{-1},
\]
on the Galois side for some quasi-character
$\psi:W(\QQ_p)^{ab} \to \CC^\times$. Note that since the trace lies in
$\QQ$, $\psi$ satisfies a quadratic relation, hence its image lies in
a quadratic field contained in a cyclotomic extension (since $\psi$ has
finite order). This gives the following possibilities for the order of
inertia of $\psi$: $1$, $2$, $3$, $4$ or $6$.
\begin{itemize}
\item Clearly $\psi$ cannot have order $1$ (since otherwise the
representation is unramified at $p$).
\item If $\psi$ has order $2$, $\psi$ must be the (unique) quadratic
character ramified at $p$. Then $E$ is the twist of an unramified principal series, i.e., $E_p$ has good reduction at $p$.
\item If $\psi$ has order $3$, $4$ or $6$, there exists a newform
$g \in S_2(\Gamma_0(p \cdot m),\varepsilon)$, where $\varepsilon =
\psi^{-2}$, such that $f_E = g \otimes \psi$. In particular $\varepsilon$ has
 always order $2$ or $3$.
\end{itemize}
In the last case, the form has inner twists, since the Fourier
coefficients satisfy that $\overline{a_p} = a_p \varepsilon^{-1}(p)$
(see for example \cite[Proposition $3.2$]{RibetMF1V}).

\begin{rem}
  The newform $g$ can be taken to be the same for $E$ and
  $E_p$.
\label{rem:order3}
\end{rem}

\subsection{The case $\psi$ has order $2$.} This case is very similar
to the one treated  in the previous section. The curve $E_p$ has good reduction at $p$,
and is isomorphic via $\phi$ to $E$. It is quite easy to see that under these conditions $\sign(E,K)=\sign(E_p,K)=-1$.
Exactly as before we can construct Heegner points on $E_p$ and transfer them to $E$.

\subsection{The case $\psi$ has order $3$, $4$ or $6$.}\label{subsection:346} Let $d$ be
the order of $\psi$. 
Let ${g \in S_2(\Gamma_0(p \cdot m),\varepsilon)}$ as before. Suppose its
$q$-expansion at the infinity cusp is given by $g=\sum a_{n}q^{n}$.
Following \cite{Ribet}, we define the coefficient field
$K_{g}:=\QQ(\left\{a_n \right\})$.

\begin{rem}
  $K_g$ is an imaginary quadratic field
  generated by the values of $\psi$. It is equal to  $\QQ(i)$ if $d=4$ and 
  to $\QQ(\sqrt{-3})$ if $d=3$ or $d=6$.
\end{rem}

There is an abelian variety $A_{g}$ defined over $\QQ$ attached to $g$
via the Eichler-Shimura construction, with an action of $K_g$ on it,
i.e. there is an embedding $\theta:K_g \hookrightarrow
(\End_\QQ(A_g)\otimes \QQ)$. The variety $A_g$ can be defined as the
quotient $J_{1}(p \cdot m)/ I_{g} J_{1}(p \cdot m)$ where $I_{g}$ is the annihilator
of $g$ under the Hecke algebra acting on the Jacobian. Moreover, the
L-series of $A_g$ satisfies the relation
\[ 
L(A_g/\QQ,s)=L(g,s)L(\overline{g},s). 
\]
The variety $A_{g}$ has dimension $[K_{g}:\QQ]=2$ and is
$\QQ$-simple. However, it is not absolutely simple. The
variety $A_{g}$ is isogenous over $\overline{\QQ}$ to the square of
an elliptic curve (called a building block for
$A_{g}$, see \cite{GL} for the general theory). 

Under our hypotheses we have an explicit description. Let
$L= \overline{\QQ}^{\ker(\varepsilon)}$ (which is the splitting field
of $A_g$). It is a cubic extension if $d=3,6$ (and in particular
$p \equiv 1 \pmod{3}$) and the quadratic extension $\QQ(\sqrt{p})$ if
$d=4$ (which implies $p \equiv 1 \pmod 4$).  Let $M$ be the extension
$\overline{\QQ}^{\ker(\psi)}$.

\begin{prop}
\begin{itemize}
\item There exists an elliptic curve $\tilde{E}/L$ and an isogeny,
 defined over $L$, $\omega: A_{g} \rightarrow
  \tilde{E}^2$.  Furthermore, if $d=3$ $($resp. $d=6)$
  $\tilde{E}=E$ $($resp. $\tilde{E}=E_p)$ while if $d=4$,
  $\tilde{E}$ is the quadratic twist of
  $E/\QQ(\sqrt{p})$ by the unique quadratic extension unramified outside
  $p$.
\item In any case, there exists an isogeny $\varphi: A_{g} \rightarrow E^2$ defined over $M$.
\end{itemize}

\label{prop:splitting}
\end{prop}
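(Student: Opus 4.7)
The plan is to combine Ribet's theory of $\QQ$-simple modular abelian varieties with inner twists with the explicit identity $f_E = g\otimes\psi$. First I would observe that the relation $\overline{a_n} = \varepsilon^{-1}(n) a_n$ for $(n,pm)=1$ is precisely an inner twist of $g$ by the Dirichlet character $\varepsilon^{-1}$. The general building block formalism (see \cite{Ribet} and \cite{GL}) then produces, after base change to the fixed field $L = \overline{\QQ}^{\ker\varepsilon}$, an $L$-rational isogeny $\omega : (A_g)_L \to \tilde E^2$ for some elliptic curve $\tilde E/L$, which gives the existence part of the first bullet.

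To identify $\tilde E$ explicitly I would compare $\lambda$-adic Galois representations for primes $\lambda$ of $K_g$. The building block satisfies $V_\lambda(\tilde E)\otimes K_{g,\lambda}\cong \rho_{g,\lambda}$ as $G_L$-representations, while $f_E = g\otimes\psi$ yields $\rho_E \cong \rho_g\otimes\psi$. Hence over $G_L$,
\[
\rho_{\tilde E}\cong \rho_E\otimes(\psi^{-1}|_{G_L}).
\]
A case analysis on $d$ then pins down $\tilde E$. If $d=3$, then $\varepsilon = \psi^{-2} = \psi$ (since $\psi^3=1$), so $L=M$ and $\psi|_{G_L}$ is trivial, giving $\tilde E\cong E$ over $L$. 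If $d=6$, then $\varepsilon$ has order $3$, $L$ is the cubic subfield of $\QQ(\zeta_p)$, and $\psi|_{G_L}$ coincides with the restriction of $\psi^3$---the unique order-$2$ character of $G_\QQ$ ramified only at $p$---producing $\tilde E\cong (E_p)_L$. If $d=4$, then $\varepsilon = \psi^2$ has order $2$, $L=\QQ(\sqrt{p})$ (using $p\equiv 1\pmod 4$), and $\psi|_{G_L}$ is a nontrivial quadratic character of $G_L$ ramified only at the prime above $p$, so $\tilde E$ is the corresponding quadratic twist of $E_L$. In each case Faltings' isogeny theorem promotes the Galois-theoretic identification into an honest $L$-isogeny.

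For the second bullet, $\psi|_{G_M}$ is trivial by definition of $M$, so $\rho_E|_{G_M}\cong \rho_g|_{G_M}$. Combined with the $L$-rational decomposition already produced (and the inclusion $L\subseteq M$), this yields the desired $M$-rational isogeny $\varphi : A_g\to E^2$. The step I expect to be most delicate is the $d=4$ case: one must verify that $\psi|_{G_L}$ is precisely the quadratic character cutting out the unique quadratic extension of $L$ unramified outside $p$. This requires an explicit local analysis at $p$, combining the description of $\psi$ as a tame character of $W(\QQ_p)$ with a class-field-theoretic computation over $L$ to check both ramification and uniqueness.
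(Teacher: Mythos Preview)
Your proposal is correct and follows essentially the same approach as the paper: both use the relation $f_E = g\otimes\psi$, analyze the restriction $\psi|_{G_L}$ case by case on $d\in\{3,4,6\}$, and conclude via Faltings' isogeny theorem. The only difference is presentational---you invoke the building-block formalism of \cite{Ribet} and \cite{GL} to obtain the $L$-rational splitting before identifying $\tilde E$, whereas the paper skips this step and directly compares $L$-series over $L$ (resp.\ $M$) to apply Faltings in one stroke.
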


\begin{proof}
  $A_g \simeq E^2$ over $M$ because (on the
  representation side) the twist becomes trivial while restricted to
  $M$, so the L-series of $A_g$ becomes the square of that of $E$ (over such
  field) and by Falting's isogeny Theorem there exists an isogeny
  (defined over $M$).
If $d=3$, $\varepsilon=\psi^2$ and $M=L$, while if $d=6$, starting
with $E_p$ (whose character has order $3$) gives the result. If $d=4$,
it is clear (on the representation side) that
$L(A_g,s) = L(\tilde{E},s)$ over $L$, where $\tilde{E}$ is the twist
of $E$ (while looked over
$\bar{\QQ}^{\ker(\varepsilon)} = \QQ(\sqrt{p})$) by the quadratic
character $\psi^2$. Then Falting's isogeny Theorem proves the claim.
\end{proof}

\begin{prop}
  Let $\sigma \in \Gal(\bar{\QQ}/\QQ)$. Then
  $\varphi^\sigma:A_g \to E^2$ is equal to
  $ \varphi \kappa(\left. \sigma \right|_M) $, where $\kappa$ is some
  character of $\Gal(M/\QQ)$ of order $[M:\QQ]$.
  \label{prop:conjugateisogeny}
\end{prop}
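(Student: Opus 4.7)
The plan is to define, for each $\sigma \in \Gal(\bar{\QQ}/\QQ)$, the element
\[
\lambda_\sigma := \varphi^{-1}\circ\varphi^\sigma \in \End_M(A_g)\otimes\QQ,
\]
where the inverse is taken in the isogeny category. Since $E$ has no CM, this algebra is isomorphic to $M_2(\QQ)$. Because $\varphi$ is defined over $M$, $\lambda_\sigma$ depends only on $\sigma|_M \in \Gal(M/\QQ)$. The statement then reduces to three claims: (i) $\lambda_\sigma \in \theta(K_g)$, so that we may write $\lambda_\sigma = \theta(\kappa(\sigma|_M))$ for some $\kappa(\sigma|_M) \in K_g^\times$; (ii) $\kappa$ is a group homomorphism; (iii) $\kappa$ has exact order $[M:\QQ]$.

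For (i), the key step is to show that $\lambda_\sigma$ commutes with every $\theta(x)$. Two Galois-invariance facts enter: since $\theta(K_g) = \End_\QQ(A_g)\otimes\QQ$, each $\theta(x)$ is Galois-fixed; and because $E$ has no CM, $\End_{\bar{\QQ}}(E^2)\otimes\QQ = M_2(\QQ) = \End_\QQ(E^2)\otimes\QQ$, so the transported element $\theta'(x) := \varphi\,\theta(x)\,\varphi^{-1}$ is likewise Galois-fixed. A short calculation then gives
\[
\lambda_\sigma\,\theta(x)
= \varphi^{-1}(\varphi\,\theta(x))^\sigma
= \varphi^{-1}(\theta'(x)\,\varphi)^\sigma
= \varphi^{-1}\,\theta'(x)\,\varphi^\sigma
= \theta(x)\,\lambda_\sigma.
\]
Since $\theta(K_g)$ is a quadratic subfield of $M_2(\QQ)$, hence a maximal commutative subalgebra equal to its own centraliser, this forces $\lambda_\sigma \in \theta(K_g)^\times$, yielding $\kappa(\sigma|_M)$.

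Claim (ii) follows from the cocycle identity $\lambda_{\sigma\tau} = \lambda_\sigma\cdot\lambda_\tau^\sigma$, which collapses to $\lambda_{\sigma\tau} = \lambda_\sigma\,\lambda_\tau$ because $\lambda_\tau \in \theta(K_g) = \End_\QQ(A_g)\otimes\QQ$ is Galois-fixed. The values of $\kappa$ lie in the torsion subgroup of $K_g^\times$, i.e.\ in $\mu_4$ if $K_g = \QQ(i)$ or $\mu_6$ if $K_g = \QQ(\sqrt{-3})$, consistent with $[M:\QQ] = d \in \{3,4,6\}$.

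The main obstacle is (iii): showing that $\kappa$ is injective on $\Gal(M/\QQ)$. The cleanest approach is to identify $\kappa$ directly with $\psi$ (or an explicit power of it) by tracking the decomposition $f_E = g\otimes\psi$ through the Eichler--Shimura construction of $A_g$ and the isogeny $\varphi$ of Proposition~\ref{prop:splitting}. Since $M = \bar{\QQ}^{\ker\psi}$, $\psi$ is by construction a faithful character of $\Gal(M/\QQ)$, whence $\kappa$ has exact order $[M:\QQ]$. Alternatively, if some nontrivial subgroup $H \subset \Gal(M/\QQ)$ lay in $\ker\kappa$, then $\varphi^\sigma = \varphi$ for all $\sigma \in H$ would force $\varphi$ to descend to $M^H \subsetneq M$, so $A_g$ would be isogenous to $E^2$ already over $M^H$; but the Galois action on $\End_{\bar{\QQ}}(A_g)\otimes\QQ \cong M_2(\QQ)$ factors faithfully through $\Gal(M/\QQ)$ with fixed subalgebra exactly $K_g$, so no proper subfield of $M$ can trivialise the action, contradicting the existence of such a descent.
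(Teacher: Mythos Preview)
Your approach is the same as the paper's: set $\lambda_\sigma=\varphi^{-1}\varphi^\sigma$, show it lands in $\theta(K_g)$, deduce the homomorphism property from $\QQ$-rationality of $\theta(K_g)$, and argue the order by a descent obstruction. In fact your treatment of (i) is more careful than the paper's: the paper simply writes ``$a_\sigma\in\End(A_g)\otimes\QQ=K_g$'' without saying why the a~priori $M$-rational endomorphism $\varphi^{-1}\varphi^\sigma$ lies in the $\QQ$-rational piece; your commutation argument (using that both $\theta(x)$ and $\theta'(x)=\varphi\theta(x)\varphi^{-1}\in M_2(\QQ)$ are Galois-fixed) supplies exactly the missing step.

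There is, however, a genuine gap in your second argument for (iii). You claim that the Galois action on $\End_{\bar\QQ}(A_g)\otimes\QQ\cong M_2(\QQ)$ factors \emph{faithfully} through $\Gal(M/\QQ)$. This is false when $d\in\{4,6\}$. By Proposition~\ref{prop:splitting} the variety $A_g$ already splits as $\tilde E^2$ over the smaller field $L=\bar\QQ^{\ker\varepsilon}\subsetneq M$, so $\End_L(A_g)\otimes\QQ=M_2(\QQ)$ and the Galois action on the endomorphism algebra factors through $\Gal(L/\QQ)$, not $\Gal(M/\QQ)$. Thus your argument cannot exclude $\ker\kappa=\Gal(M/L)$. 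The correct obstruction is not about endomorphisms of $A_g$ but about which elliptic curve it splits into: if $\varphi$ descended to $M^H$ for some nontrivial $H$, then $A_g\sim E^2$ over $M^H\subset L$ (or over $L$ after enlarging), and comparing with $A_g\sim\tilde E^2$ over $L$ would give $V_\ell(E)^2\cong V_\ell(\tilde E)^2$ as $G_L$-modules, hence $E\sim\tilde E$ over $L$ by Krull--Schmidt and Faltings, contradicting that $\tilde E$ is the nontrivial quadratic twist of $E$ by $M/L$. Equivalently---and this is what the paper tacitly uses---the relation $\rho_g=\rho_E\otimes\psi^{-1}$ shows $A_g\sim E^2$ over a field $F$ forces $\psi|_{G_F}=1$, i.e.\ $F\supset M$. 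Your first approach (identifying $\kappa$ with $\psi^{\pm1}$) is the right one and, once made precise, gives exactly this.
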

\begin{proof}
Since $\varphi$ and $\varphi^\sigma$ are isogenies of the same degree there exists an element
$a_\sigma \in  \End(A_g) \otimes  \QQ=K_{g}$ of norm $1$ such that $\varphi^{\sigma} = \varphi a_{\sigma}$. The map
$\kappa(\left. - \right|_M) : \Gal(\bar{\QQ}/\QQ) \rightarrow K^{\times}_{g}$, given by sending $\kappa(\left. \sigma \right|_M) \mapsto a_{\sigma}$
is a character, since the endomorphism $a_{\sigma}$ is defined over $\QQ$.
 Clearly $\kappa$ has the predicted order since otherwise the isogeny $\varphi$
  could be defined over a smaller extension (given by the fixed field
  of its kernel), which is not possible.
\end{proof}

In order to explicitly compute Heegner points it is crucial to have a better understanding of the isogenies $\omega$ and $\varphi$.
 Let us recall some basic properties of Atkin-Li operators
for modular forms with nebentypus, as explained in \cite{Atkin-Li}.
Let $N$ be a positive integer, and let $P \mid N$ be such that
$\gcd(P,N/P)=1$. Let $N'=\frac{N}{P}$  and decompose $\varepsilon =\varepsilon_P \varepsilon_{N'}$, where each
character is supported in the set of primes dividing the sub-index.

\begin{thm}
  Assuming the previous hypotheses, there exists an operator
  $W_P:S_2(\Gamma_0(N),\varepsilon) \to
  S_2(\Gamma_0(N),\overline{\varepsilon_P} \varepsilon_{N'})$ which satisfies the following properties:
  \begin{itemize}
  \item $W_P^2 = \varepsilon_P(-1) \overline{\varepsilon_{N'}}(P)$.
  \item If $g$ is an eigenvector for $T_q$ for some prime $q \nmid N$
    with eigenvalue $a_q$, then $W_p(g)$ is an eigenvector for $T_q$
    with eigenvalue $\overline{\varepsilon_P}(q) a_q$.
  \item If $g \in S_2(\Gamma_0(N),\varepsilon)$ is a newform, then
    there exists another newform $h \in
    S_2(\Gamma_0(N),\overline{\varepsilon_P}\varepsilon_{N})$ and a
    constant $\lambda_P(g)$ such that $W_P(g) = \lambda_P(g) h$.
  \item The number $\lambda_P(g)$ is an algebraic number of absolute
    value $1$. Furthermore, if $a_P$, the $P$-th Fourier coefficient
    of the newform $g$, is non-zero then
\[
\lambda_P(g)=G(\varepsilon_P)/a_P,
\]
where $G(\chi)$ denotes the Gauss sum of the character $\chi$.
  \end{itemize}
\label{thm:AtLi}
\end{thm}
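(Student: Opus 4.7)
The plan is to construct $W_P$ explicitly as a twisted weight-two slash operator and then derive the four properties by direct matrix manipulation, in the style of Atkin-Li. Since $\gcd(P,N')=1$, one can pick a matrix
\[
\eta_P = \begin{pmatrix} P \alpha & \beta \\ N \gamma & P \delta \end{pmatrix},\qquad \det(\eta_P)=P,
\]
i.e.\ with $P\alpha\delta - N'\beta\gamma = 1$. For $g\in S_2(\Gamma_0(N),\varepsilon)$, I would set $W_P(g) := \overline{\varepsilon_{N'}}(\alpha)\, g|_2\eta_P$. The scalar $\overline{\varepsilon_{N'}}(\alpha)$ is the correct normalisation: any two admissible matrices differ by left multiplication by an element of $\Gamma_0(N)$, and the compensating $\varepsilon_{N'}$-factor coming from the nebentypus of that element precisely cancels the change in $\alpha$, so $W_P(g)$ does not depend on the choice of $\alpha,\beta,\gamma,\delta$.

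To verify that $W_P$ lands in $S_2(\Gamma_0(N),\overline{\varepsilon_P}\varepsilon_{N'})$, I would compute $\eta_P\gamma_0\eta_P^{-1}$ for a generic $\gamma_0=\begin{pmatrix} a & b \\ c & d \end{pmatrix}\in\Gamma_0(N)$: the result still lies in $\Gamma_0(N)$, with lower-right entry $\equiv a \pmod P$ and $\equiv d \pmod{N'}$. Since $ad\equiv 1 \pmod P$, the value $\varepsilon_P(a)$ equals $\overline{\varepsilon_P}(d)$, so evaluating $\varepsilon=\varepsilon_P\varepsilon_{N'}$ at the new entry yields exactly $\overline{\varepsilon_P}(d)\varepsilon_{N'}(d)$, i.e.\ the target nebentypus evaluated at $d$. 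The identity $W_P^2=\varepsilon_P(-1)\overline{\varepsilon_{N'}}(P)$ then reduces to the matrix computation $\eta_P^2 \equiv P\cdot\mathrm{diag}(-1,-1) \pmod{\Gamma_0(N)}$, together with careful bookkeeping of the two character twists, which contribute the factors $\varepsilon_P(-1)$ and $\overline{\varepsilon_{N'}}(P)$ respectively.

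Hecke compatibility for $q\nmid N$ is a standard double-coset calculation: $\eta_P^{-1}\,\mathrm{diag}(1,q)\,\eta_P$ represents the same double coset $\Gamma_0(N)\,\mathrm{diag}(1,q)\,\Gamma_0(N)$ as $T_q$, but straightening the coset representatives introduces the factor $\overline{\varepsilon_P}(q)$ through the action of the nebentypus on the shifted top-left entry. Combined with properties (1) and (2), strong multiplicity one for newforms immediately forces $W_P(g)=\lambda_P(g)\,h$ for a newform $h\in S_2(\Gamma_0(N),\overline{\varepsilon_P}\varepsilon_{N'})$, and $|\lambda_P(g)|=1$ follows from $W_P^2$ being a root of unity. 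The most delicate point is the formula $\lambda_P(g)=G(\varepsilon_P)/a_P$ when $a_P\neq 0$: factoring $\eta_P = \mathrm{diag}(P,1)\,u$ for a suitable $u\in\Gamma_0(N')$, expanding $g|_2\eta_P$ term by term, and interchanging the summations produces the inner sum $\sum_{a\bmod P}\varepsilon_P(a)e^{2\pi i a/P}=G(\varepsilon_P)$; comparing the first Fourier coefficient of $W_P(g)$ against $\lambda_P(g)\cdot a_1(h)=\lambda_P(g)$ gives the identity. I expect this Fourier-coefficient calculation to be the main obstacle, since it requires reconciling the exact normalisation of the pseudo-eigenvalue with the Gauss sum emerging from the twist, and forces one to isolate the $P$-th coefficient of $g$ as the unique source of a non-vanishing contribution.
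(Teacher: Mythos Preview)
Your sketch is correct and follows precisely the line of argument in Atkin--Li's original paper. Note, however, that the paper does not give its own proof of this theorem: its entire proof is the single sentence ``See \cite[Propositions 1.1, 1.2, and Theorems 1.1 and 2.1]{Atkin-Li}.'' So there is no independent approach to compare against; you have reproduced (in sketch form) the very argument the paper is citing. In that sense your proposal and the paper's proof coincide by construction, with the only difference being that you supply the details while the paper defers them to the reference.
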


The number $\lambda_P(g)$ is called the pseudo-eigenvalue of $W_P$ at $g$.
 
\begin{proof}
  See \cite[Propositions 1.1, 1.2, and Theorems 1.1 and 2.1]{Atkin-Li}.
\end{proof}
In our setting $N=p \cdot m$, $P=p$, $\varepsilon_{N'}$ is trivial, and $W_p$ is
an involution (i.e. $W_p^2=1$) acting on the differential forms of
$A_g$.

If $\eta$ is an endomorphism of $J(\Gamma_1(N))$ (or one of its
quotients), we denote $\eta^*$ the pullback it induces on the
differential forms. Given an integer $u$ let $\alpha_u$ be the
endomorphism of $J(\Gamma_1(N))$ corresponding to the action of the matrix 
$\left(\begin{smallmatrix} 1 & u/p\\ 0 & 1\end{smallmatrix}\right)$ on differential forms. Such endomorphism is defined over the cyclotomic field of
$p$-th roots of unity.

Let $\tau \in \Gal(K_g/\QQ)$ denote complex conjugation. Recall that
$\tau{a_q}=a_q \varepsilon^{-1}(q)$ for all positive integers $q$
prime to $p \cdot m$. Following \cite{Ribet2} we define
\[ 
\eta_{\tau} = \sum_{u \; (\text{mod }{p})}\varepsilon (u) \alpha_u.
\]
Since $\varepsilon(u) \in \On_{K_g}$, via the map $\theta$ we think of
$\eta_\tau$ as an element in $\End_{L}(A_g)$.  To normalize
$\eta_{\tau} $ we follow \cite{GL}. Let $a_p \in K_g$ be the $p$-th
Fourier coefficient of $g$.
\begin{lemma}
  The element $a_p$ has norm $p$.
\end{lemma}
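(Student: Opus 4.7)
The plan is to deduce the norm from the Atkin--Li formula of Theorem~\ref{thm:AtLi}. Since the nebentypus of $g$ is $\varepsilon = \psi^{-2}$, and $\psi$ has order $3$, $4$, or $6$ with inertial image of the same order, the character $\varepsilon$ is nontrivial and has conductor exactly $p$. In particular $\varepsilon = \varepsilon_p$ in the notation of the theorem, and the level $N = p\cdot m$ decomposes as $P = p$, $N' = m$ with $\varepsilon_{N'}$ trivial, placing us exactly in the hypotheses of the fourth bullet of Theorem~\ref{thm:AtLi}.

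The first step is to check that $a_p \neq 0$, so that the formula $\lambda_p(g) = G(\varepsilon_p)/a_p$ applies. This follows from the local description at $p$: the local component of $f_E$ is a ramified principal series, and the newform $g = f_E \otimes \psi^{-1}$ has local component $\pi(\mathbf{1}, \varepsilon\omega_1^{-1})$, i.e.\ a principal series with one unramified factor and one factor of conductor $p$. For such a newform of weight $2$ with $p \| N$ and $\varepsilon_p$ nontrivial, $a_p$ is, up to $\sqrt{p}$, the value at a uniformizer of the unramified character, hence nonzero.

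The second step is to invoke the identity $\lambda_p(g) = G(\varepsilon_p)/a_p$ from Theorem~\ref{thm:AtLi}. Combined with the fact (same theorem) that $|\lambda_p(g)| = 1$ and with the classical evaluation $|G(\varepsilon_p)|^2 = p$ (valid because $\varepsilon_p$ is a primitive character modulo $p$), we conclude
\[
|a_p|^2 = |G(\varepsilon_p)|^2 / |\lambda_p(g)|^2 = p.
\]

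Finally, since $a_p \in K_g$ and $K_g$ is imaginary quadratic (equal to $\QQ(i)$ or $\QQ(\sqrt{-3})$ by the remark following the definition of $K_g$), complex conjugation is the unique nontrivial element of $\Gal(K_g/\QQ)$, so $\Nm_{K_g/\QQ}(a_p) = a_p\, \overline{a_p} = |a_p|^2 = p$, as claimed. The only delicate point is the non-vanishing of $a_p$; it is where one has to use specifically that the local representation is a principal series (rather than supercuspidal or Steinberg), but this is built into our standing assumption that $E$ acquires good reduction over an abelian extension.
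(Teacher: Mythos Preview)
Your proof is correct and takes a genuinely different route from the paper's. The paper argues on the Galois/geometric side: it identifies $a_p$ with a root of the characteristic polynomial of Frobenius for the reduction of $E$ over the minimal totally ramified extension of $\QQ_p$ where $E$ acquires good reduction (citing \cite{Tim}); since that extension has residue field $\FF_p$, the norm of $a_p$ is $p$ by the Weil bound. Your argument stays entirely on the automorphic side, combining the pseudo-eigenvalue formula of Theorem~\ref{thm:AtLi} with the standard Gauss-sum evaluation $|G(\varepsilon_p)|^2=p$. This has the virtue of being self-contained within the paper (no external appeal to \cite{Tim}) and in fact recovers the classical result of Li that $|a_p|^2=p^{k-1}$ for any weight-$k$ newform with $p\Vert N$ and nebentypus ramified at $p$; so it applies to $g$ without ever invoking $E$. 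The paper's approach, by contrast, exploits the specific origin of $g$ as a twist of $f_E$ and yields an arithmetic interpretation of $a_p$ itself (as a Frobenius eigenvalue), not just of its absolute value---information that is morally behind the later identification of $W_p$ with $(\eta_\tau/a_p)^*$. Your justification of $a_p\neq 0$ via the principal-series description is fine; one could also simply note that for $p\Vert N$ with $\varepsilon$ of conductor $p$ the local component can never be supercuspidal or special, so the $U_p$-eigenvalue is automatically nonzero.
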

\begin{proof}
  Looking at the curve $E$ over $\QQ_p$, the coefficient $a_p$ is one
  of the roots of the characteristic polynomial attached to the
  Frobenius element in the minimal (totally ramified) extension where
  $E$ acquires good reduction (see for example Section 3 of
  \cite{Tim}). Since the norm of the local uniformizer in such
  extension is $p$ (because the extension ramifies completely) the
  result follows.
\end{proof}
We then consider the normalized endomorphism $\frac{\eta_{\tau}}{a_p}$. 
\begin{rem}
  Our choice is a particular case of the one considered in \cite{GL},
  since our normalization corresponds to the splitting map
  $\beta:\Gal(K_g/\QQ)\to K_g^\times$ given by $\beta(\tau)=a_p$.
\end{rem}

\begin{thm} The operator $W_p$ coincides with
  $\left(\frac{\eta_{\tau}}{a_p}\right)^*$.
\label{thm:A-Lrelation}
\end{thm}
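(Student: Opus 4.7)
The plan is to verify the identity on the two-dimensional space of holomorphic differentials of $A_g$, which admits a natural basis given by the pullbacks of the newform $g$ and of its Galois conjugate $\overline{g}=g^{\tau}$. Both $W_p$ and $(\eta_\tau/a_p)^*$ are involutions on this space (the first by the first bullet of Theorem~\ref{thm:AtLi} together with the fact that $\varepsilon_{N'}$ is trivial and $\varepsilon_p(-1)=1$; the second because $\eta_\tau^2/a_p^2$ corresponds to $\tau^2 = 1$ in $\Gal(K_g/\QQ)$), so it suffices to verify that they agree on a single basis vector.

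The core computation is the direct action of $\eta_\tau^*$ on $g$. Since $\alpha_u^*$ acts on the upper half plane as the translation $z\mapsto z+u/p$, expanding gives
\[
\eta_\tau^*\,g(z) \;=\; \sum_{u\,\mathrm{mod}\,p} \varepsilon(u)\, g(z+u/p) \;=\; \sum_{n\geq 1} a_n \Big(\sum_{u}\varepsilon(u) e^{2\pi i un/p}\Big)\, q^n.
\]
Because $\varepsilon$ is a nontrivial primitive character modulo $p$, a standard Gauss-sum evaluation shows that the inner sum equals $\varepsilon^{-1}(n)\,G(\varepsilon)$ when $\gcd(n,p)=1$ and vanishes when $p\mid n$. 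Invoking the inner-twist relation $\overline{a_n}=\varepsilon^{-1}(n)\,a_n$ for $\gcd(n,pm)=1$ (noted just after Remark~\ref{rem:order3}), I would then identify $\eta_\tau^*\,g$ with $G(\varepsilon)\cdot\overline{g}$ as a class in $H^0(A_g,\Omega^1)$.

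To finish, I would apply Theorem~\ref{thm:AtLi}: it produces a newform $h\in S_2(\Gamma_0(pm),\overline{\varepsilon})$ with $W_p(g)=\lambda_p(g)\,h$ and pseudo-eigenvalue $\lambda_p(g)=G(\varepsilon)/a_p$. The second bullet of that theorem shows $h$ has $T_q$-eigenvalue $\overline{\varepsilon}(q)\,a_q=\overline{a_q}$ for every prime $q\nmid pm$, so multiplicity-one for newforms forces $h=\overline{g}$. Therefore $W_p(g)=(G(\varepsilon)/a_p)\,\overline{g}$, which matches $(\eta_\tau/a_p)^*\,g$ exactly, and the involution property upgrades this to the full identity of operators.

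The chief subtlety I anticipate is a genuine bookkeeping discrepancy on Fourier coefficients at multiples of $p$: the direct computation makes the $q^{pn}$ coefficients of $\eta_\tau^*\,g$ vanish, whereas $\overline{g}$ has $p$-th coefficient $\overline{a_p}=p/a_p\neq 0$ (using the lemma that $|a_p|^2=p$). I would resolve this by arguing that we are computing classes in $H^0(A_g,\Omega^1)$ and not in $H^0(X_1(pm),\Omega^1)$: the descent through the quotient $J_1(pm)\twoheadrightarrow A_g$ identifies $\overline{g}$ with its $p$-depletion on $A_g$, since the difference lies in the kernel of the projection to the $g$-isotypic component. Making this identification precise — perhaps by comparing the matrix of each operator in the basis $\{\omega_g,\omega_{\overline{g}}\}$ after normalizing the differentials so their first Fourier coefficient equals $1$ — is where I expect the real work to lie.
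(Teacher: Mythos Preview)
Your approach matches the paper's: both verify the identity on the basis $\{g,\overline{g}\}$ of $H^0(A_g,\Omega^1)$, use the Atkin--Li theorem to get $W_p\, g = (G(\varepsilon)/a_p)\,\overline{g}$ (your identification $h=\overline{g}$ via multiplicity one is exactly how this goes), and compare with $(\eta_\tau/a_p)^*g$. The paper does not attempt the Fourier computation you outline; it simply invokes \cite[Lemma~2.1]{GL} for the identity $\eta_\tau^*\,g = G(\varepsilon)\,\overline{g}$, and then runs the same argument on $\overline{g}$ using $\overline{G(\varepsilon)}=G(\overline{\varepsilon})$ (valid because $\varepsilon$ is even) rather than appealing to the involution property as you do.

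Where your proposal needs correction is the handling of the $p$-th coefficient discrepancy. Your suggested fix --- that $\overline{g}$ and its $p$-depletion become identified on $A_g$ --- does not work: the pullback $H^0(A_g,\Omega^1)\hookrightarrow S_2(\Gamma_1(pm))$ along the quotient $J_1(pm)\twoheadrightarrow A_g$ is injective, so distinct cusp forms give distinct differentials. Worse, the $p$-depletion $\overline{g}-\overline{a_p}\,V_p\overline{g}$ lives in level $p^2m$, not $pm$, so your naive upper-half-plane computation is not even producing a differential on $J_1(pm)$. This signals that the individual $\alpha_u$ are not endomorphisms of $J_1(pm)$ in the way the formula suggests; making sense of $\eta_\tau=\sum_u\theta(\varepsilon(u))\alpha_u$ as a genuine element of $\End_L(A_g)$ and computing its cotangent action is precisely the content of Ribet's construction and of the Gonz\'alez--Lario lemma the paper cites. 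The right move is therefore to invoke that lemma rather than to patch the Fourier expansion.
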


\begin{proof}
  It is enough to see how it acts on the basis $\{g,\overline{g}\}$ of
  differential forms of $A_g$. By Theorem~\ref{thm:AtLi} (since $a_p$
  is non-zero), $W_p(g) = \lambda_p \overline{g}$, where $\lambda_p =
  G(\varepsilon)/a_p$. On the other hand,
  $\eta_\tau(f)=G(\varepsilon)\overline{f}$, by
  \cite[Lemma $2.1$]{GL}.
  Exactly the same argument applies to $\overline{g}$, using the fact
  that $\overline{G(\varepsilon)}=G(\overline{\varepsilon})$, since
  $\varepsilon$ is an even character.
\end{proof}

\begin{coro}
  The Atkin-Li operator $W_p$ is defined over $L$, i.e. it corresponds
  to an element in $\End_{L}(A_g) \otimes \QQ$. Its action decomposes
  as a direct sum of two $1$-dimensional spaces.
\end{coro}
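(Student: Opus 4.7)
The plan is to combine Theorem~\ref{thm:A-Lrelation}, which identifies $W_p$ with the endomorphism $(\eta_\tau/a_p)^*$, with an explicit Galois-descent computation. Since $W_p$ corresponds to a well-defined element of $\End(A_g)\otimes\QQ$, I only need to determine its field of definition, and I would do so by working with its characterization via the action on the two-dimensional space of differentials $H^0(A_g,\Omega^1)=\langle g,\overline{g}\rangle$, namely $W_p(g)=\lambda_p\overline{g}$ and $W_p(\overline{g})=\overline{\lambda_p}\,g$ with $\lambda_p = G(\varepsilon)/a_p$ by Theorem~\ref{thm:AtLi}. Since these two forms give a basis of the cotangent space of $A_g$, this data determines $W_p$ as an endomorphism up to isogeny.

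For every $\sigma\in\Gal(\overline{\QQ}/L)$ I would verify the equivariance $W_p(\omega^\sigma)=W_p(\omega)^\sigma$ for $\omega\in\{g,\overline{g}\}$. Writing $\sigma(\zeta_p)=\zeta_p^t$, the hypothesis $\sigma|_L=\mathrm{id}$ forces $\varepsilon(t)=1$, while $\sigma|_{K_g}$ is either trivial or complex conjugation. In the trivial case, $g$, $\overline{g}$ and $\lambda_p$ are all fixed by $\sigma$ and the check is immediate. In the complex-conjugation case, a short Gauss-sum computation using $\varepsilon(-1)=1$ yields $G(\varepsilon)^\sigma = G(\overline{\varepsilon}) = \overline{G(\varepsilon)}$ and $a_p^\sigma = \overline{a_p}$, hence $\lambda_p^\sigma = \overline{\lambda_p}$; combined with $g^\sigma = \overline{g}$ and $\overline{g}^\sigma = g$, both equivariances then follow.

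For the second assertion, $W_p^2=1$ forces the eigenvalues of $W_p$ on $\langle g,\overline{g}\rangle$ to lie in $\{+1,-1\}$; since $W_p$ genuinely exchanges $g$ and $\overline{g}$ up to the nonzero factor $\lambda_p$ it is not scalar, so both eigenvalues occur with multiplicity one, with eigenlines $\langle g + \lambda_p\overline{g}\rangle$ and $\langle g - \lambda_p\overline{g}\rangle$. Passing to the associated idempotents $\tfrac{1}{2}(1\pm W_p)$ yields the asserted decomposition of $A_g$ up to isogeny over $L$ into a product of two one-dimensional abelian subvarieties.

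The main subtlety is the complex-conjugation case above: one must verify $\lambda_p^\sigma = \overline{\lambda_p}$ for $\sigma$ fixing $L$ but acting non-trivially on $K_g$. This is only possible because $L$ does not contain $K_g$ in general (e.g.\ when $d=3$, $L$ is cyclic cubic while $K_g=\QQ(\sqrt{-3})$), so Galois elements in $\Gal(\overline{\QQ}/L)$ can act non-trivially on $K_g$-valued quantities such as $a_p$ and the Gauss sum $G(\varepsilon)$; once the precise $\sigma$-transformation of these two quantities is pinned down, everything reduces to a bookkeeping of signs.
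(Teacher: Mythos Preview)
Your argument is correct, and the second assertion is handled exactly as the paper intends. However, for the first assertion you take a more laborious route than the paper does. The corollary is stated in the paper with no proof because it is meant to be immediate from Theorem~\ref{thm:A-Lrelation} together with facts already recorded: the paper has explicitly noted, just before defining $\eta_\tau$, that $\eta_\tau$ (viewed via $\theta$) lies in $\End_L(A_g)$, and that $\theta$ embeds $K_g$ into $\End_\QQ(A_g)\otimes\QQ$. Since $W_p$ equals $(\eta_\tau/a_p)^*$ with $a_p\in K_g$, the operator is automatically in $\End_L(A_g)\otimes\QQ$---no further Galois computation is needed.

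What you do instead is essentially reprove that $\eta_\tau/a_p$ is $\Gal(\overline{\QQ}/L)$-invariant by tracking the effect of $\sigma$ on the pseudo-eigenvalue $\lambda_p=G(\varepsilon)/a_p$ and on the basis $\{g,\overline g\}$. This is valid and self-contained (indeed it unpacks the reason why $\eta_\tau\in\End_L(A_g)$, via the transformation law $\eta_\tau^\sigma=\varepsilon(t)^{-1}\eta_\tau$ for $\sigma(\zeta_p)=\zeta_p^t$), but it duplicates work already done. The paper's route is shorter; yours makes the descent entirely explicit and independent of the Ribet-style normalization, which has some expository value but is not needed here.
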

Let 
\[
\omega : A_g \to (W_p+1)A_g \times (W_p-1)A_g.
\]
Then both terms are $1$-dimensional, and the isogeny $\omega$ gives a
splitting as in Proposition~\ref{prop:splitting}.

\begin{rem}
  The explicit map $\omega$ satisfies the first statement of
  Proposition~\ref{prop:splitting}. In order to get the second
  statement we need to eventually compose it the isomorphism between
  $\tilde{E}$ and $E$. Recall that $E =\tilde{E}$ if $d=3$ and
  $\tilde{E}$ is a quadratic twist of it otherwise, so in any case the
  isomorphism is easily computed.
\end{rem}

\subsection{Heegner points}
This section follows Section $4$ of \cite{darmon2012birch}, so we
suggest the reader to look at it first.  Keeping the notations of the
previous sections, let $\varepsilon:(\ZZ/p)^\times\to \CC^\times$ be a
Dirichlet character. Extend the character to
$(\ZZ/{p \cdot m})^\times$ by composing with the canonical projection
$(\ZZ/{p \cdot m})^\times \rightarrow (\ZZ/p)^\times$ and define
\[
\Gamma^{\varepsilon}_{0}(p \cdot m) :=\left\lbrace
  \left(\begin{smallmatrix} a & b\\ c & d\end{smallmatrix}\right) \in \Gamma_0(p \cdot m) :
  \varepsilon(a)=1 \right\rbrace.
\]
Let $X^{\varepsilon}_{0}(p \cdot m)$ be the modular curve obtained as the
quotient of the extended upper half plane $\mathcal{H}^{*}$ by this
group. This modular curve has a model defined over $\QQ$ and it
coarsely represents the moduli problem of parameterizing quadruples
$(E,Q,C,[s])$ where
\begin{itemize}
\item $E$ is an elliptic curve over $\CC$,
\item $Q$ is a point of order $m$ on $E(\CC)$,
\item $C$ is a cyclic subgroup of $E(\CC)$ of order $p$,
\item $[s]$ is an orbit in $C \setminus \left\lbrace 0 \right\rbrace$ under the
  action of $\ker(\varepsilon) \subset (\ZZ/p)^{\times}$.
\end{itemize}

\begin{rem}
  There is a canonical map $\Phi: X^{\varepsilon}_{0}(p \cdot m) \to X_0(p \cdot m)$ which is
  the forgetful map in the moduli interpretation. This map has degree
  $\ord(\varepsilon)$.
\end{rem}
As in the classical case, there exists a modular parametrization 
\[
\xymatrix{
 X^{\varepsilon}_{0}(p \cdot m)\ar[d]_\Phi\ar[r]^-{\Psi} {\ar@/_1pc/[rr]_{\Psi_g}}& \Jac(X^{\varepsilon}_{0}(p \cdot m)) \ar[r]^-\pi& A_{g}\\
X_0(p \cdot m)\ar@/_/@{.>}[urr]
}
\]
where $\Psi(P)=\left( P \right) - \left( \infty \right)$ (the usual
immersion of the curve in its Jacobian) and $\pi$ is the
Eichler-Shimura projection onto $A_{g}$. These maps are defined over
$\QQ$, as the cusp $\infty$ is rational.
Our strategy is to  construct Heegner points on
$X^{\varepsilon}_{0}(p \cdot m)$ and push them through the modular
parametrization $\Psi_{g}$ to the abelian variety $A_{g}$ and finally
project them onto the elliptic curve $E$.
To construct points on $X^{\varepsilon}_{0}(p \cdot m)$, we consider the canonical map
\[
\Phi:X^{\varepsilon}_{0}(p \cdot m) \to X_0(p \cdot m),
\]
 and look at preimages of classical Heegner points on $X_0(p \cdot m)$.

Since the conductor  $p \cdot m$ satisfies the classical Heegner hypothesis with respect to
$K$ there is a cyclic ideal $\id{n}$ of norm $p \cdot m$.  Let $c$ be a
positive integer such that $gcd(c,p \cdot m)=1$. Then, a classical Heegner point
on $X_0(p \cdot m)$ corresponds to a triple
$P_{\id{a}}=(\Om_{c},\id{n},[\id{a}]) \in X_{0}(p \cdot m)(H_{c})$, where
$[\id{a}] \in \Pic(\Om_c)$.  Such point is represented by the elliptic
curve $E_{\id{a}}= \CC/ \id{a}$ and its $\id{n}$ torsion points
$E_{\id{a}}[\id{n}]$ (which are isomorphic to ($\id{a}{\id{n}}^{-1}/\id{a})$) are defined over $H_{c}$.  

The action of $\Gal (\overline{\QQ}/ H_{c})$ on $E_{\id{a}}[\id{n}]$
gives a map
$\Gal(\overline{\QQ}/H_c) \to
(\id{a}{\id{n}}^{-1}/\id{a})^\times$. Composing such map with the
character $\varepsilon$ gives
\[ 
\rho: \Gal(\overline{\QQ}/H_{c}) \rightarrow
(\id{a}{\id{n}}^{-1}/\id{a})^{\times} \stackrel{\varepsilon}{\rightarrow} \CC^{\times} .
\]
Its kernel corresponds to an extension $\tilde{H}_{c}$ of degree $\ord(\varepsilon)$ of $H_{c}$. Let $\tilde{H_c}=H_{c} M$.

\begin{prop}
  The $\ord(\varepsilon)$ points $\Phi^{-1}(P_{\id{a}})$ lie in
  $X^{\varepsilon}_{0}(p \cdot m)(\tilde{H_c})$ and are permuted under
  the action of $\Gal(\tilde{H_c}/H_c)$.
\end{prop}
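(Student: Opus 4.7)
The plan is to unwind the moduli-theoretic descriptions of $X_0(p \cdot m)$ and $X_0^{\varepsilon}(p \cdot m)$ and track the Galois action on the extra datum $[s]$ introduced by passing to $X_0^{\varepsilon}(p\cdot m)$. Since $\Phi$ is the forgetful map, a preimage of $P_{\id{a}} = (\Om_c, \id{n}, [\id{a}])$ corresponds to a quadruple $(E_{\id{a}}, Q, C, [s])$ in which $(E_{\id{a}}, Q, C)$ is the triple determined by $P_{\id{a}}$ and $[s]$ runs over the set of orbits of $C \setminus \{0\}$ under $\ker(\varepsilon) \subset (\ZZ/p)^{\times}$. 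Since $|C \setminus \{0\}| = p-1$ and $|\ker(\varepsilon)| = (p-1)/\ord(\varepsilon)$, there are exactly $\ord(\varepsilon)$ such orbits, matching the degree of $\Phi$.

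Next I would analyze the field of definition of each preimage. The underlying triple $(E_{\id{a}}, Q, C)$ is defined over $H_c$ by the classical theory of complex multiplication, so any $\sigma \in \Gal(\overline{\QQ}/H_c)$ fixes the first three entries of the quadruple and acts on the fourth via its action on the $\id{n}$-torsion $E_{\id{a}}[\id{n}] \cong \id{a}\id{n}^{-1}/\id{a}$. Concretely, restricting this action to $C$ gives a homomorphism $\Gal(\overline{\QQ}/H_c) \to (\id{a}\id{n}^{-1}/\id{a})^{\times}$, and if $\sigma$ corresponds to multiplication by $\lambda$, then $\sigma$ sends the quadruple $(E_{\id{a}}, Q, C, [s])$ to $(E_{\id{a}}, Q, C, [\lambda s])$.

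The key point is now elementary: two orbits $[s]$ and $[\lambda s]$ coincide precisely when $\lambda \in \ker(\varepsilon)$, i.e. precisely when $\rho(\sigma) = 1$. Hence the stabilizer of any preimage is $\Gal(\overline{\QQ}/\tilde{H}_c)$ by definition of $\tilde{H}_c$, which shows that each preimage is defined over $\tilde{H}_c$ and no smaller extension. Moreover, $\rho$ induces an injection $\Gal(\tilde{H}_c/H_c) \hookrightarrow \CC^{\times}$ with image the group of $\ord(\varepsilon)$-th roots of unity, so $\Gal(\tilde{H}_c/H_c)$ acts freely and transitively on the $\ord(\varepsilon)$ preimages. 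The main (and essentially only) subtlety is verifying that the Galois action on $C$ really does factor through $(\id{a}\id{n}^{-1}/\id{a})^{\times}$ with image having order divisible by $\ord(\varepsilon)$, which follows from the standard reciprocity law for CM elliptic curves together with our assumption $\gcd(c, p\cdot m) = 1$ (ensuring enough ramification in $\tilde{H}_c/H_c$).
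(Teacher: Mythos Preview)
Your moduli-theoretic argument is correct and makes explicit exactly what one needs: the preimages of $P_{\id{a}}$ are obtained by adding the datum $[s]$, Galois acts on $[s]$ through the character $\rho$, and hence the stabilizer is $\Gal(\overline{\QQ}/\tilde{H}_c)$ by the very definition of $\tilde{H}_c$ as the fixed field of $\ker(\rho)$. This establishes the proposition as literally stated.

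However, your proof and the paper's proof address complementary points. The paper takes the moduli-theoretic part you spell out as essentially tautological (given how $\tilde{H}_c$ was introduced) and instead devotes its proof to the explicit identification $\tilde{H}_c = H_c M$ asserted in the sentence just before the proposition. That identification is a class-field-theory computation: by CM theory $\tilde{H}_c \subset H_c K_{\id{p}} = H_c(\xi_p)$, and since $\QQ(\sqrt{p^{\ast}}) \subset H_c$ with $H_c/K$ unramified at $p$, the unique degree-$\ord(\varepsilon)$ extension of $H_c$ inside $H_c(\xi_p)$ is $H_c M$. You gesture at this in your final paragraph (``standard reciprocity law\ldots ensuring enough ramification'') but do not carry it out. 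The identification $\tilde{H}_c = H_c M$ is what is actually used downstream (e.g.\ in Theorem~\ref{thm:explicit}, where one needs $\varphi$ defined over $M \subset \tilde{H}_c$), so while your argument proves the displayed proposition, the paper's proof is supplying the piece of content that matters for the sequel.
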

\begin{proof}
  By complex multiplication $\tilde{H_c}$ lies in the composition of
  $H_{c}$ and the ray class field $K_{\id{p}}$, where $\id{p}$ is the
  unique prime of $K$ dividing $p$. The composition $H_{c}K_{\id{p}}$
  equals $H_{c}(\xi_{p})$, where $\xi_{p}$ is a
  $p$-th root of unity. Note that
  $\QQ(\sqrt{p^ {\ast}}) \subset H_{c}$ and the extension
  $H_{c}/K$ is unramified at $p$. Therefore, the unique extension of
  degree $ord(\varepsilon)$ of $H_{c}$ lying inside $H(\xi_{p})$ is
  given by $H_{c} \bar{\QQ}^ {ker(\psi)}=H_{c}M$.
\end{proof}
Using the aforementioned moduli interpretation, points on
$X^{\varepsilon}_{0}(p \cdot m)$ represent quadruples
$\left( \Om_{c}, \id{n}, \left[ \id{a} \right],[t] \right)$ where
$[t]$ is an orbit under $\ker(\varepsilon)$ inside
$(\Om_{c}/ (\id{n}/\id{p}))^{\times}$.
\begin{rem}  
  Let $\sigma \in  \Gal(\tilde{H_{c}}/K) $. Its action
 on Heegner points is given by
\[
 \sigma \cdot(\Om_{c},\id{n},[\id{a}] ,[t])=(\Om_{c}, \id{n}, [\id{a}{\id{b}}^{-1}],[dt]),
\]
where  $\sigma \mid_{H_c}=\text{Frob}_{\id{b}}$,
and $d=\rho(\sigma) \in {\Om_{c}/(\id{n}/\id{p})}^{\times}$. 
\label{rem:isogeny}
\end{rem}

\subsection{Zhang's formula}
\begin{thm}[Tian-Yuan-Zhang-Zhang]
  Let $K$ be an imaginary quadratic field satisfying the Heegner
  hypothesis for $p \cdot m$ and let
  $\tilde{\chi} : \mathbb{A}^{\times}_{K} \rightarrow \CC^{\times}$ be
  a finite order Hecke character such that
  $\tilde{\chi} \mid_{\mathbb{A}^{\times}_{\QQ}} = \varepsilon^{-1}$.
  Then $L(g,\tilde{\chi},s)$ vanishes at odd order at
  $s=1$. Moreover, if such order equals $1$,
  $(A_{g}(\tilde{H_{c}})) \otimes \CC))^{\tilde{\chi}}$ has rank one
  over $K_{g} \otimes \CC$. 

  More precisely, consider the Heegner point
  $\left( \left[ \mathfrak{a} \right],\mathfrak{n},1 \right) \in
  X^{\varepsilon}_{0}(p \cdot m)(\tilde{H_{c}} )$ and denote by
  $P_{c}$ its image under the modular parametrization $\Psi_{g}$. Then
\[ 
P^{\tilde{\chi}}= \sum_{\sigma \in \Gal(\tilde{H_{c}}/K)} \bar{\tilde{\chi}}(\sigma) P_{c}^{\sigma}  \in (A_{g}(\tilde{H_{c}})\otimes \CC)^{\tilde{\chi}}    
\]  
generates a rank one subgroup over $K_{g} \otimes \CC$.
\label{thm:chinos}
\end{thm}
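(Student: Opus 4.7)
The plan is to derive Theorem~\ref{thm:chinos} by specializing the generalized Gross--Zagier formula of Yuan--Zhang--Zhang \cite{yuan2013gross} to the pair $(g,\tilde\chi)$. The classical Heegner hypothesis on $p\cdot m$ with respect to $K$ puts us in the $\GL_2$ setting of \emph{loc.\ cit.}, in which CM points can be realized on $X_0(p\cdot m)$, or here on its cover $X_0^{\varepsilon}(p\cdot m)$. The compatibility $\tilde\chi\mid_{\mathbb{A}_\QQ^\times}=\varepsilon^{-1}$ is exactly the matching of central characters that allows one to form the Rankin--Selberg $L$-function $L(g,\tilde\chi,s)$ and study its functional equation.

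First I would check that the sign of this functional equation equals $-1$, forcing the odd order of vanishing at the central point. At all finite $v\nmid p\cdot m\cdot\Disc(K)\cdot\cond(\tilde\chi)$ the local sign is $+1=(\tilde\chi\eta)_v(-1)$; at primes $q\mid m$ the classical Heegner hypothesis together with the fact that the nebentype $\varepsilon$ is supported only at $p$ produces the local sign $\eta_q(-1)$ required to stay in the matrix algebra, exactly as in the classical setting. The sensitive place is $p$: here $\pi_g$ is a ramified principal series with nebentype $\varepsilon$, $\tilde\chi_{\id{p}}$ is a character of $K_{\id{p}}^\times$ satisfying $\tilde\chi_{\id{p}}\mid_{\QQ_p^\times}=\varepsilon^{-1}$, and a Tunnell--Saito style computation combined with the analysis surrounding Table~\ref{table:signs} shows that the local sign at $p$, together with the archimedean $-1$ coming from the weight two holomorphic form, yields global sign $-1$. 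I expect this bookkeeping at $p$ to be the main technical step, since the interaction of $\varepsilon$, $\tilde\chi_{\id{p}}$ and $\eta_p$ must be tracked carefully.

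Next I would identify the CM datum $([\id{a}],\id{n},1)\in X_0^\varepsilon(p\cdot m)(\tilde H_c)$ with a CM point in the sense of \cite{yuan2013gross}: by the theory of complex multiplication the underlying triple $(\Om_c,\id{n},[\id{a}])$ is defined over $H_c$, and the extra orbit datum refining it to a point of $X_0^\varepsilon(p\cdot m)$ lifts it to $\tilde H_c=H_c M$ by the previous proposition. The $\tilde\chi$-average
\[
P^{\tilde\chi}=\sum_{\sigma\in\Gal(\tilde H_c/K)}\bar{\tilde\chi}(\sigma)P_c^\sigma
\]
is then the $\tilde\chi$-isotypical projection to which the main theorem of \cite{yuan2013gross} applies, identifying $L'(g,\tilde\chi,1)$ with an explicit non-zero multiple of the N\'eron--Tate height of $P^{\tilde\chi}$ computed on $A_g$ via $\pi$. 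When the order of vanishing at $s=1$ equals $1$, the non-degeneracy of the height pairing together with the multiplicity-one statement in \cite{yuan2013gross} forces $(A_g(\tilde H_c)\otimes\CC)^{\tilde\chi}$ to be one-dimensional over $K_g\otimes\CC$, generated by $P^{\tilde\chi}$.
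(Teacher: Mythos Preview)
Your sketch is a reasonable expansion of what the paper does, which is simply to cite \cite[Theorem~4.3.1]{tian2003euler}, \cite{zhang2010arithmetic}, and \cite[Theorem~1.4.1]{yuan2013gross} without further argument. So you are not reproducing the paper's proof; you are unpacking the cited results. That is fine, and the parts where you invoke the Yuan--Zhang--Zhang formula to identify $L'(g,\tilde\chi,1)$ with a multiple of the height of $P^{\tilde\chi}$, and hence conclude that $P^{\tilde\chi}$ is non-torsion when the order of vanishing is exactly one, are on target.

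There is, however, a genuine gap in your final step. You write that ``the non-degeneracy of the height pairing together with the multiplicity-one statement in \cite{yuan2013gross} forces $(A_g(\tilde H_c)\otimes\CC)^{\tilde\chi}$ to be one-dimensional over $K_g\otimes\CC$.'' This is not what the Gross--Zagier type formula gives. The formula $L'(g,\tilde\chi,1)=c\cdot\langle P^{\tilde\chi},P^{\tilde\chi}\rangle$ with $c\neq 0$ shows only that the height of $P^{\tilde\chi}$ is non-zero, hence that the $\tilde\chi$-part has rank \emph{at least} one. Multiplicity one for automorphic representations says nothing about the rank of a Mordell--Weil group. Bounding the rank \emph{from above} by one is a separate result, coming from an Euler-system/Kolyvagin argument; this is precisely why the paper also cites Tian's thesis \cite{tian2003euler} alongside \cite{yuan2013gross}. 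If you want your sketch to cover the full statement of Theorem~\ref{thm:chinos}, you must invoke that Euler-system input (or an equivalent rank-bounding result) for the ``rank exactly one'' clause, and reserve \cite{yuan2013gross} for the sign computation and the non-vanishing of $P^{\tilde\chi}$.
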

\begin{proof}
  See \cite[Theorem 4.3.1]{tian2003euler}, \cite{zhang2010arithmetic}, and \cite[Theorem 1.4.1]{yuan2013gross}.
\end{proof}

Let $c$ be a positive integer relatively prime to
$\Disc(K) \cdot p \cdot m$, and let $\chi$ be a ring class character
of $\Gal(H_c/K)$. Since $\bar{\kappa}^ 2= \varepsilon^{\pm 1}$, the
character
$\tilde{\chi} : \Gal(\tilde{H_{c}}/K) \rightarrow \CC^{\times}$ given
by $\tilde{\chi}=\chi \bar{\kappa}$ satisfies the hypothesis of
Theorem~\ref{thm:chinos} (for either $g$ or its conjugate $\bar{g}$).
Summing up, we get the following theorem:

\begin{thm}
\label{thm:explicit}
 The point  $\varphi( P^{ \chi \bar{ \kappa} })$ belongs to $(E^2(H_{c} \otimes \CC))^{\chi}$. In addition, it is non-torsion if and only if  
$ L'(E/K, \chi ,1) \neq 0$.
\end{thm}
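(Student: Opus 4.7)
The plan is to reduce the statement to Theorem~\ref{thm:chinos} applied to $P^{\chi\bar\kappa}$, transferring the conclusion through the isogeny $\varphi$ while keeping track of the failure of $\varphi$ to be $\QQ$-rational via Proposition~\ref{prop:conjugateisogeny}.

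First I would verify that $\varphi(P^{\chi\bar\kappa})$ descends to the $\chi$-isotypic component of $E^{2}(H_{c})\otimes\CC$. Since $P^{\chi\bar\kappa}$ lies in the $\chi\bar\kappa$-isotypic piece of $A_{g}(\tilde H_{c})\otimes\CC$ by Theorem~\ref{thm:chinos}, the standard computation yields $(P^{\chi\bar\kappa})^{\sigma}=\chi(\sigma)\bar\kappa(\sigma)\,P^{\chi\bar\kappa}$ for $\sigma\in\Gal(\tilde H_{c}/K)$. Combining this with $\varphi^{\sigma}=\varphi\cdot\kappa(\sigma|_{M})$ from Proposition~\ref{prop:conjugateisogeny}, and using that $\bar\kappa$ factors through $\Gal(M/\QQ)$ so that $\bar\kappa(\sigma)=\bar\kappa(\sigma|_{M})$, I obtain
\[
\bigl(\varphi(P^{\chi\bar\kappa})\bigr)^{\sigma}
=\varphi^{\sigma}\bigl((P^{\chi\bar\kappa})^{\sigma}\bigr)
=\chi(\sigma)\,\kappa(\sigma|_{M})\bar\kappa(\sigma|_{M})\,\varphi(P^{\chi\bar\kappa})
=\chi(\sigma)\,\varphi(P^{\chi\bar\kappa}).
\]
Restricting to $\sigma\in\Gal(\tilde H_{c}/H_{c})$, on which $\chi$ is trivial, shows that the point is fixed and hence lies in $E^{2}(H_{c})\otimes\CC$; the formula for arbitrary $\sigma$ then exhibits it as $\chi$-isotypic.

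For the non-torsion claim, note that since $\varphi$ is an isogeny, the induced map $\varphi\otimes\CC$ is an isomorphism, so $\varphi(P^{\chi\bar\kappa})$ is non-torsion if and only if $P^{\chi\bar\kappa}$ is. By Theorem~\ref{thm:chinos}, applied to $g$ or to $\bar g$ according to whether $\bar\kappa^{2}=\varepsilon^{-1}$ or $\bar\kappa^{2}=\varepsilon$, this happens exactly when $L'(g,\chi\bar\kappa,1)\neq 0$. The proof is then completed by the $L$-function identity
\[
L(g,\chi\bar\kappa,s)=L(E/K,\chi,s),
\]
which I would deduce from the automorphic twist relation $f_{E}=g\otimes\psi$ together with the identification of $\bar\kappa$, viewed as a character of $\Gal(\tilde H_{c}/K)$, with the base change $\psi\circ N_{K/\QQ}$; twisting both sides by $\chi$ and comparing Euler factors prime by prime, using the coprimality hypothesis $\gcd(\cond(\chi),pm\cdot\Disc(K))=1$, yields the equality.

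The main obstacle I anticipate is precisely pinning down the identification $\bar\kappa=\psi\circ N_{K/\QQ}$. The character $\kappa$ is defined intrinsically by the action of $\Gal(\bar\QQ/\QQ)$ on the isogeny $\varphi$, built from the Atkin--Li involution via Theorem~\ref{thm:A-Lrelation}, while $\psi$ is read off the local Weil--Deligne representation of $E$ at $p$. Matching them requires unwinding the construction $\omega=(W_{p}\pm 1)/2$ through the Galois action and, for $d=4$ or $d=6$, absorbing the quadratic twist identifying $\tilde E$ with $E$; the latter is what accounts for the possibility of having to interchange $g$ and $\bar g$ in the application of Theorem~\ref{thm:chinos}.
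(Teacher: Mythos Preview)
Your proposal is correct and follows essentially the same route as the paper: use Proposition~\ref{prop:conjugateisogeny} to cancel the $\bar\kappa$-twist when pushing through $\varphi$, and then identify $L(g,\chi\bar\kappa,s)$ with $L(E/K,\chi,s)$ so that Theorem~\ref{thm:chinos} gives the non-torsion criterion. The paper's computation of the first part is phrased as a direct manipulation of the defining sum for $P^{\chi\bar\kappa}$ rather than by checking the $\sigma$-action on the assembled point, but these are the same calculation.

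Your anticipated ``main obstacle'' is handled in the paper far more cheaply than you fear, and you should adopt their shortcut. You do not need to unwind the Atkin--Li construction or trace the Galois action through $\omega$: both $\bar\kappa$ and $\psi$ are faithful characters of the cyclic group $\Gal(M/\QQ)$ of order $d\in\{3,4,6\}$, and since $\phi(d)=2$ in each case there are exactly two such characters, inverse to one another. Hence $\bar\kappa=\psi^{\pm 1}$ automatically. The sign ambiguity is then absorbed by the freedom to replace $g$ with $\bar g$, using $g\otimes\psi=\bar g\otimes\psi^{-1}=f_E$. (In particular your formulation $\bar\kappa=\psi\circ N_{K/\QQ}$ is more than is needed; one only requires $\bar\kappa=\psi^{\pm 1}$ as characters of $\Gal(M/\QQ)$.)
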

\begin{proof} By definition and Proposition~\ref{prop:conjugateisogeny}.
\[
\varphi( P^{ {\chi}\bar{ \kappa} })=\sum_{\sigma \in \Gal (\tilde{H_{c}}/K) }  \bar{ \chi}(\sigma)  \varphi( \kappa(\sigma)  P ^\sigma)=  \sum_{\sigma \in \Gal (\tilde{H_{c}}/K) }  \bar{ \chi}(\sigma)  {\varphi( P)}^{\sigma} , \]
so it lies in the right space.
  Since $\ord(\kappa)=\ord(\psi)$ and
  ${\bar{\kappa}}^2=\varepsilon^{\pm 1}$ we get
  $\bar{\kappa}=\psi^{\pm 1}$. We know that
  $g \otimes \psi=\bar{g} \otimes {\psi}^{-1}=f_E$, therefore,  using  $g$ or $\bar{g}$, we obtain $L(g,\tilde{\chi},s)=L(E,{\chi},s)$. 
  Theorem~\ref{thm:chinos} and the previous result imply that
  $\varphi( P^{ \chi \bar{ \kappa} }) \in (E^2(H_{c} \otimes
  \CC))^{\chi}$ is non-torsion if and only if $ L'(E/K, \chi ,1) \neq 0$.
 \end{proof}
Once we construct a non-torsion point on $E \times E$ we can project it to some coordinate in order to obtain a non-torsion point on $E$.

\subsection{Heegner systems}
As in the classical case, the family of Heegner points constructed
using different orders satisfy certain compatibilities.  

\begin{prop} Let $\ell$ be a prime such that $\ell \nmid N$ and $\ell$
  is inert in $K$. Then for every Heegner point
  $P_{c\ell} \in A_{g}(\tilde{H}_{c\ell})$ there exists a Heegner
  point $P_{c} \in A_{g}(\tilde{H_{c}})$ with
\begin{equation}
  \label{eq:kolyvagin}
\Tr_{ \tilde{H}_{c\ell}/\tilde{H_{c}}} P_{c \ell}= \theta(a_{\ell}) P_{c}  ,  
\end{equation}
where $a_{\ell}$ is the $\ell$-th Fourier coefficient of $g$.
\end{prop}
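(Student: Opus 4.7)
The plan is to adapt the classical Kolyvagin trace relation to the present setting, where two new features must be tracked: we work on the covering $X_0^\varepsilon(p\cdot m)$ rather than $X_0(p\cdot m)$, and the natural field of definition is $\tilde{H}_c = H_c \cdot M$ rather than $H_c$.

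First I would pin down the relevant field degrees. Since $\ell \nmid p\cdot m\cdot \Disc(K)$, the extension $H_{c\ell}/H_c$ is ramified only at primes above $\ell$, while $\tilde{H}_c/H_c$ (of degree $\ord(\varepsilon)$ by the proposition preceding Remark~\ref{rem:isogeny}) is ramified only at primes above $p$, since $M \subset \QQ(\xi_p)$. These two extensions are therefore linearly disjoint over $H_c$, so by multiplicativity of degrees $[\tilde{H}_{c\ell}:\tilde{H}_c] = [H_{c\ell}:H_c] = \ell+1$, the latter equality being the classical one for $\ell$ inert in $K$. Moreover every $\sigma \in \Gal(\tilde{H}_{c\ell}/\tilde{H}_c)$ restricts trivially to $M$, hence $\rho(\sigma)=1$, so by Remark~\ref{rem:isogeny} the action on a Heegner quadruple $(\Om_{c\ell}, \id{n}_{c\ell}, [\id{a}], [t])$ fixes the orbit $[t]$ and only permutes the class $[\id{a}]$.

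Next I would unpack the moduli picture. The inclusion $\Om_{c\ell} \subset \Om_c$ gives rise to a degree $\ell$ isogeny $E_{c\ell} \to E_c$ of the underlying elliptic curves. Because $\gcd(\ell, p\cdot m)=1$, this isogeny is an isomorphism on $p\cdot m$-torsion, so it carries both the $\id{n}$-level structure and the $p$-torsion orbit $[t]$ faithfully. The $\ell+1$ Galois conjugates of $P_{c\ell}$ thus correspond bijectively to the $\ell+1$ index-$\ell$ sublattices inside $\id{a}$, and under $\Phi: X_0^\varepsilon(p\cdot m) \to X_0(p\cdot m)$ they project to the $\ell+1$ points in the classical Hecke divisor $T_\ell(\Phi(P_c))$.

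Finally I would invoke Eichler--Shimura. The Hecke correspondence $T_\ell$ lifts to $X_0^\varepsilon(p\cdot m)$ (since $\ell$ is prime to $p$, the summation over degree-$\ell$ isogenies is compatible with the $\varepsilon$-structure), and under the projection $\pi:\Jac(X_0^\varepsilon(p\cdot m)) \to A_g$ it becomes multiplication by $\theta(a_\ell)$, since $g$ is a $T_\ell$-eigenform with eigenvalue $a_\ell$. Summing the conjugates of $\Psi(P_{c\ell})$ in the Jacobian and applying $\pi$ therefore yields $\theta(a_\ell) P_c$, where $P_c$ is the image under $\Psi_g$ of the preimage of $\Phi(P_c)$ carrying the compatible $[t]$-orbit. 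I expect the main obstacle to be exactly this bookkeeping of the $\varepsilon$-level structure along the degree-$\ell$ isogeny: one must verify that the $\ell+1$ conjugates on $X_0^\varepsilon(p\cdot m)$ precisely enumerate (without duplication) the lifts to the $\varepsilon$-cover of the $\ell+1$ classical Hecke preimages, so that the equality of divisors on $X_0$ really does upgrade to an equality in $\Jac(X_0^\varepsilon(p\cdot m))$ before pushing to $A_g$.
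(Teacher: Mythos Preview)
Your proposal is correct and follows exactly the approach the paper indicates: the paper's own proof consists only of the sentence ``The proof mimics the classical case one (see \cite[Proposition 3.7]{Gr89}),'' and what you have written is precisely a careful unwinding of that classical argument with the two necessary adaptations (the $\varepsilon$-level structure and the passage from $H_c$ to $\tilde{H}_c$) made explicit. Your disjointness argument for $\tilde{H}_{c\ell}$ over $\tilde{H}_c$ via ramification, your use of Remark~\ref{rem:isogeny} to see that $\Gal(\tilde{H}_{c\ell}/\tilde{H}_c)$ fixes the orbit $[t]$, and your identification of the Hecke correspondence with $\theta(a_\ell)$ on $A_g$ are all sound and supply exactly the details the paper omits.
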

\begin{proof}
  The proof mimics the classical case one (see \cite[Proposition $3.7$]{Gr89}).
\end{proof}
To construct a point on $E$, we first apply the isogeny $\varphi$ to a
point in $A_g$ and then project onto one of the coordinates (call
$\pi_i$ the projection to the $i$-th coordinate). But $K_g$ does not
act on $E$! To overcome this problem, we restrict to primes $\ell$
which split completely in $L$. Let
$Q_{c}:= \pi_{i}(\Tr_{ \tilde{H_{c}}/H_{c}} \varphi (P_{c})) \in
E(H_{c})$.
\begin{prop}
Let $\ell$ be a prime such that $\ell \nmid N$, $\ell$ is inert in $K$ and $\ell$ splits completely in $L$. Then for every Heegner point $Q_{c\ell} \in E(H_{c\ell})$ there exists a Heegner
point $Q_{c} \in E(H_{c})$ such that 
\[ 
\Tr_{H_{c \ell}/H_{c}} Q_{c \ell} = a_{\ell} Q_{c} .
\]
\label{prop:Heegner-compatibility}
\end{prop}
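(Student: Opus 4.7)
The plan is to deduce this from the compatibility relation~\eqref{eq:kolyvagin} on $A_g$ by pushing it down to $E$ via the isogeny $\varphi$ and the projection $\pi_i$, after ensuring that the Hecke eigenvalue $a_\ell\in K_g$ is actually a rational integer, so that it commutes with $\varphi$. Given the Heegner point $Q_{c\ell}\in E(H_{c\ell})$, coming from some $P_{c\ell}\in A_g(\tilde{H}_{c\ell})$, the candidate $Q_c$ will be the projection of the corresponding $P_c$ produced by the previous proposition.

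First I would observe that since $L=\bar{\QQ}^{\ker(\varepsilon)}$, the hypothesis that $\ell$ splits completely in $L$ is equivalent to $\varepsilon(\ell)=1$. Combined with the inner twist relation $\overline{a_\ell}=a_\ell\,\varepsilon^{-1}(\ell)$ (see \cite[Proposition $3.2$]{RibetMF1V}), this yields $\overline{a_\ell}=a_\ell$, so $a_\ell\in K_g\cap\RR=\QQ$, and in fact $a_\ell\in\ZZ$. Consequently $\theta(a_\ell)\in \End_{\QQ}(A_g)$ is simply multiplication by the integer $a_\ell$, and in particular commutes with the isogeny $\varphi$.

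The main computation is then as follows. Apply $\varphi$ to both sides of \eqref{eq:kolyvagin}: since $\varphi$ is defined over $M\subset\tilde{H}_c$, it commutes with $\Tr_{\tilde{H}_{c\ell}/\tilde{H}_c}$, yielding
\[
\Tr_{\tilde{H}_{c\ell}/\tilde{H}_c}\varphi(P_{c\ell})=\varphi(\theta(a_\ell)P_c)=a_\ell\,\varphi(P_c).
\]
Now apply $\Tr_{\tilde{H}_c/H_c}$ and use transitivity of traces in a tower twice: first to identify the resulting left-hand side with $\Tr_{\tilde{H}_{c\ell}/H_c}\varphi(P_{c\ell})$, and then to refactor that as $\Tr_{H_{c\ell}/H_c}\bigl(\Tr_{\tilde{H}_{c\ell}/H_{c\ell}}\varphi(P_{c\ell})\bigr)$. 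Projecting through $\pi_i$, which is $\QQ$-rational and therefore commutes with every Galois action, and unwinding the definitions of $Q_c$ and $Q_{c\ell}$ then gives $\Tr_{H_{c\ell}/H_c}Q_{c\ell}=a_\ell\,Q_c$.

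The main obstacle is justifying the double use of the tower law, which rests on having $\tilde{H}_{c\ell}=M\cdot H_{c\ell}$ with $[\tilde{H}_{c\ell}:H_{c\ell}]=\ord(\varepsilon)$; this follows by repeating the argument describing $\tilde{H}_c$ from the earlier proposition (which identified $\tilde{H}_c$ as $H_c M$), replacing $c$ by $c\ell$ (legitimate since $\gcd(c\ell,N\Disc(K))=1$ by our hypotheses on $\ell$). Everything else is formal bookkeeping: the commutation of $\varphi$ and $\pi_i$ with the various traces is forced by their fields of definition $M$ and $\QQ$, respectively, and the only nontrivial arithmetic input is the rationality of $a_\ell$ guaranteed by the splitting condition on $\ell$.
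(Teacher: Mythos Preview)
Your proposal is correct and follows essentially the same approach as the paper: apply $\varphi$ and then $\pi_i\circ\Tr_{\tilde{H}_c/H_c}$ to equation~\eqref{eq:kolyvagin}, use that $a_\ell\in\QQ$ because $\ell$ splits completely in $L$, and then refactor the composed trace via the tower $\tilde{H}_{c\ell}\supset H_{c\ell}\supset H_c$. Your write-up is in fact more detailed than the paper's, making explicit the inner-twist reason for $a_\ell\in\QQ$ and the identification $\tilde{H}_{c\ell}=H_{c\ell}M$ needed to justify the trace refactoring.
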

\begin{proof}
  Applying $\pi_{i}(\Tr_{ \tilde{H_{c}}/H_{c}} \varphi)$ to equation
  (\ref{eq:kolyvagin}), since $\varphi$ commutes with the trace and 
  $a_{\ell} \in \QQ$ (because $\ell$ splits completely in
  $L$) we get
\[ 
\pi_{i}(\Tr_{ \tilde{H_{c}}/H_{c}}  \Tr_{ \tilde{H}_{c\ell}/\tilde{H_{c}}} \varphi(P_{c \ell}))= a_{\ell} Q_{c} .\]
Also 
\[ 
\pi_{i}(\Tr_{ \tilde{H_{c}}/H_{c}} \Tr_{ \tilde{H_{c\ell}}/\tilde{H_{c}}} \varphi(P_{c \ell}))=  \pi_{i}(\Tr_{ H_{c \ell}/H_{c}}  \Tr_{ \tilde{H_{c\ell}}/H_{c \ell}} \varphi(P_{c \ell})),
\]
but since $\pi_{i}$ is defined over $\QQ$, this expression equals $\Tr_{ H_{c \ell}/H_{c}} Q_{c \ell}$ as claimed.
\end{proof}

The previous results are enough for proving a Kolyvagin-type
theorem. 
\begin{thm}[Kolyvagin, Bertolini-Darmon] If $\pi_i(\varphi(P^{\chi \bar{\kappa}}))$ is non-torsion, then $\dim_{\CC}(E(H_c))^\chi=1$.  
\end{thm}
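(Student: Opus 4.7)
The plan is to run Kolyvagin's Euler system machinery (in the variant developed by Bertolini--Darmon for anticyclotomic settings) using the compatible family of points $\{Q_c\}_c$ constructed in Proposition~\ref{prop:Heegner-compatibility}. That proposition supplies exactly the norm-compatibility relation needed for an Euler system of Heegner type, so once the ``second relation'' (describing the behaviour of $Q_{c\ell}$ modulo primes above $\ell$) is verified, the standard derivative-class argument bounds the $\chi$-isotypic part of the Mordell--Weil group and the theorem follows.

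First I would isolate the admissible set of Kolyvagin primes $\ell$: rational primes with $\ell \nmid N \cdot c \cdot \Disc(K)$, that are inert in $K$, and, crucially for our setting, that split completely in $L$ so that $a_\ell \in \QQ$ and the projection $\pi_i$ commutes with the action of Hecke at $\ell$. By Chebotarev there are infinitely many such primes, and they can be chosen (again by Chebotarev, applied to a suitable auxiliary extension trivializing $E[n]$) so that $a_\ell \equiv \ell + 1 \equiv 0 \pmod n$ for any fixed modulus $n$. For a squarefree product $s = \ell_1 \cdots \ell_k$ of such primes, define the Kolyvagin derivative
\[
D_s Q_{cs} = \prod_i D_{\ell_i} \, Q_{cs}, \qquad D_\ell = \sum_{i=1}^{\ell} i\, \sigma_\ell^i,
\]
where $\sigma_\ell$ generates $\Gal(H_{c\ell}/H_c) \simeq \ZZ/(\ell+1)\ZZ$. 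Proposition~\ref{prop:Heegner-compatibility} together with the telescoping identity $(\sigma_\ell - 1)D_\ell = \ell + 1 - \Tr$ shows that the class of $D_s Q_{cs}$ modulo $n$ in $E(H_{cs})/nE(H_{cs})$ is $\Gal(H_{cs}/H_c)$-invariant, so inflation produces a cohomology class $c(s) \in H^1(H_c, E[n])$.

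Next I would twist by $\chi$ in the usual way to obtain classes in the $\chi$-component, and analyse their local behaviour. At primes of $H_c$ not dividing $s$ the class $c(s)$ is locally trivial (standard argument using that the Heegner points are defined over unramified extensions at these places, together with $a_\ell \in \QQ$ which here is the payoff of forcing $\ell$ to split in $L$). At a prime $\lambda \mid \ell$, the ``second Kolyvagin relation'' expresses the singular part of the localization $\mathrm{loc}_\lambda\, c(s)$ in terms of $\mathrm{loc}_\lambda \, Q_{c \cdot s/\ell}$; deriving this relation is the one real obstacle, since in our non-classical setting $Q_c$ arose by first going to the auxiliary variety $A_g$ and back via $\pi_i \circ \varphi$. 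The relation for the points $P_c \in A_g$ follows from the moduli description of $X_0^\varepsilon(p \cdot m)$ at supersingular primes combined with Eichler--Shimura, essentially as in \cite{Gr89}; because $\varphi$ and $\pi_i$ are defined over extensions split at $\ell$, the relation descends to $\{Q_c\}$.

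Finally, I would feed these classes into the global duality / Cassels pairing argument of Bertolini--Darmon: choosing primes $\ell$ to ``kill'' elements of the $\chi$-part of the Selmer group one at a time, the nontriviality of the class at level $s=1$ (which is equivalent to $\pi_i(\varphi(P^{\chi\bar\kappa}))$ being non-torsion, since this class is essentially its image under Kummer) forces $(E(H_c) \otimes \CC)^\chi$ to be one-dimensional. Combined with Theorem~\ref{thm:explicit}, which exhibits a non-torsion element in that eigenspace, one concludes $\dim_\CC (E(H_c) \otimes \CC)^\chi = 1$. I expect the verification of the second Kolyvagin relation through the intermediate variety $A_g$ to be the main technical point; everything else is a direct transcription of the classical Kolyvagin--Bertolini--Darmon template.
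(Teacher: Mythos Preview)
Your proposal is correct and follows essentially the same route as the paper: run the Bertolini--Darmon variant of Kolyvagin's argument on the system $\{Q_c\}$, restricting to Kolyvagin primes $\ell$ that split completely in $L$ so that $a_\ell\in\QQ$, with the first norm relation supplied by Proposition~\ref{prop:Heegner-compatibility} and the second (local) relation inherited from the moduli description as in \cite{Gr89}. The one point the paper makes explicit that you gloss over is the compatibility check needed for Chebotarev: since $L$ is totally real it is linearly disjoint from the imaginary field $K$ (and from $K(E[n])$, as any descent prime is unramified in $L$), so primes that are simultaneously inert in $K$, split in $L$, and satisfy the Kolyvagin congruences actually exist.
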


\begin{proof}
  The proof is very similar to the one given in
  \cite{Bertolini-Darmon} (Theorem 2.2) with the following remarks
  (using their notation and terminology): any $p$-descent prime is
  automatically unramified in $L$ hence $K(E[p])$ and $L$ are
  disjoint. We also require special rational primes $\ell$ to split
  completely in $L/\QQ$. Recall that $L$ is totally real, hence such
  condition is compatible with the other ones and special primes do
  exist. The first assertion of Proposition $3.2$ in
  \cite{Bertolini-Darmon} is exactly our
  Proposition~\ref{prop:Heegner-compatibility}, and the second one
  follows from \cite{Gr89} (proof of Proposition 3.7). With these
  modifications, the proof of \cite{Bertolini-Darmon} holds.
\end{proof}
\section{General case}

While considering the case of many primes ramifying in $K$, it
is clear that the potentially multiplicative case works similarly.
Some extra difficulties arise in the other cases. To make the
exposition/notation easier, we start considering the following two
cases:

\noindent {\bf Case 1:} Suppose that the conductor of $E$ equals
$p_1^2 \cdots p_r^2\cdot m$ where:
\begin{itemize}
\item $E$ has potentially good reduction at all $p_i$'s over an
  abelian extension,
\item all characters $\psi_{p_i}$ have the same order, 
\item all $p_{i}$'s are ramified in $K$,
\item $m$ satisfies the classical Heegner hypothesis.
\end{itemize}
Let $P=\prod_{i=1}^r p_i$. There are $2^r$ newforms of level
$P\cdot m$ which are twists of $f$ (obtained, following the previous
section notation, by twisting $f_E$ by all possible combinations of
$\{\psi_{p_i},\overline{\psi_{p_i}}\}$). Working with all of them
implies considering an abelian variety of dimension $2^r$, but the
coefficient field has degree $2$ so such variety is not simple over
$\QQ$.

Instead, take ``any'' newform
$g \in S_2(\Gamma_0(P\cdot m),\varepsilon)$, and consider the abelian
surface $A_g$ attached to it by Eichler-Shimura. The only Atkin-Li
operator acting on (the space of holomorphic differentials of) such
variety is the operator $W_P$, which again is an involution, so we can
split the space in the $\pm 1$ part and proceed as in the
previous case considered (where the splitting map is determined by
$\beta(\tau)=\prod_{i=1}^r a_{p_i}$).

The ambiguity on the choice of $g$ is due to the following: the
operators $W_{p_i}$ act transitively on the set of all newforms
$g$. In particular they ``permute'' the different abelian surfaces
(note that such operators are not involutions, but have eigenvalues in
the coefficient field $K_g$ which is independent of $g$). Although
surfaces attached to different choices of $g$ are in general not
isomorphic (the traces of the Galois representations are different),
they become isomorphic over $M$ hence all of them give the same
Heegner points construction.

\medskip

\noindent{\bf Case 2:} Suppose the conductor of $E$ equals $p^2 \cdot q^2 \cdot m$, where
\begin{itemize}
\item $E$ has potentially good reduction at $p$ and $q$ over an abelian extension,
\item the order of $\psi_p$ equals $4$ and that of $\psi_q$ equals $3$,
\item both $p$ and $q$ ramify in $K$,
\item $m$ satisfies the classical Heegner hypothesis.
\end{itemize}
With such assumptions the coefficient field $K_{g}$ equals
$\QQ(\sqrt{-1},\sqrt{-3})$.  Let
$g \in S_2(\Gamma_0(p q m),\varepsilon)$ be any twist of $f$, obtained
by choosing local characters $\psi_p$ at $p$ and $\psi_q$ at $q$ (so
$\varepsilon = \psi_p^2 \psi_q^2$).  By Eichler-Shimura there exists a
$4$ dimensional abelian variety $A_g$ defined over $\QQ$ (attached to
$g$) and an embedding $K_g \hookrightarrow \End(A_g)\otimes \QQ$. The
Atkin-Li operators $W_p$ and $W_q$ do act on the differential forms
of $A_g$ although not necessarily as involutions. Since their
eigenvalues lie in $K_g$, we can diagonalize them.

Let $\sigma_i$ denote the Galois automorphism of $K_g$ which fixes
$\sqrt{-3}$ and $\sigma_{\sqrt{-3}}$ be the one fixing $\sqrt{-1}$ (so
their composition is complex conjugation). We have the following
analogue of Theorem~\ref{thm:A-Lrelation}.
\begin{thm} With the previous notations:
  \begin{enumerate}
  \item the operator $W_p$ coincides with
    $\left(\frac{\eta_{\sigma_i}}{a_p}\right)^*$,
\item the operator $W_q$ coincides with
  $\left(\frac{\eta_{\sigma_{\sqrt{-3}}}}{a_q}\right)^*$,
\item the operator $W_{pq}$ coincides with
  $\left(\frac{\eta_{\sigma_i \sigma_{\sqrt{-3}}}}{a_p a_q}\right)^*$.
  \end{enumerate}
\end{thm}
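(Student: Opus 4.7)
The plan is to mimic the proof of Theorem~\ref{thm:A-Lrelation} but carry it out in the $4$-dimensional space of holomorphic differentials on $A_g$, which has basis $\{g, g^{\sigma_i}, g^{\sigma_{\sqrt{-3}}}, g^{\tau}\}$ with $\tau=\sigma_i\sigma_{\sqrt{-3}}$ complex conjugation. The starting observation is that $\varepsilon$ factors as $\varepsilon_p\varepsilon_q$ where $\varepsilon_p=\psi_p^{-2}$ has order $2$ and $\varepsilon_q=\psi_q^{-2}$ has order $3$, so $\varepsilon_p$ and $\varepsilon_q$ generate, via inner twists, the two non-trivial subgroups of $\Gal(K_g/\QQ)$. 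More precisely, applying Theorem~\ref{thm:AtLi} to each of $W_p$, $W_q$, $W_{pq}$ (with the corresponding decomposition of the nebentypus into the piece supported at $P$ and the rest) produces newforms with eigenvalues $\overline{\varepsilon_P}(\ell)a_\ell$ on primes $\ell$ coprime to the level; these must be Galois conjugates of $g$ under the automorphism that realizes that inner twist. This identifies $W_p(g)=\lambda_p(g)\,g^{\sigma_i}$, $W_q(g)=\lambda_q(g)\,g^{\sigma_{\sqrt{-3}}}$, and $W_{pq}(g)=\lambda_{pq}(g)\,g^{\tau}$, with pseudo-eigenvalues $G(\varepsilon_p)/a_p$, $G(\varepsilon_q)/a_q$ and $G(\varepsilon)/(a_pa_q)$ respectively, the last one using Hecke multiplicativity $a_{pq}=a_pa_q$.

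Next I would evaluate the normalized endomorphisms on $g$. The operator $\eta_{\sigma_i}$ is defined analogously to $\eta_\tau$ in the one-prime case but with the twisting matrices $\left(\begin{smallmatrix}1 & u/p\\ 0 & 1\end{smallmatrix}\right)$ weighted by $\varepsilon_p(u)$ only; similarly $\eta_{\sigma_{\sqrt{-3}}}$ uses matrices at level $q$ weighted by $\varepsilon_q(v)$; and $\eta_{\sigma_i\sigma_{\sqrt{-3}}}$ is the composition (equivalently, a double sum over residues mod $p$ and mod $q$ weighted by $\varepsilon_p(u)\varepsilon_q(v)$). A direct computation extending \cite[Lemma 2.1]{GL} yields $\eta_{\sigma_i}(g)=G(\varepsilon_p)\,g^{\sigma_i}$, $\eta_{\sigma_{\sqrt{-3}}}(g)=G(\varepsilon_q)\,g^{\sigma_{\sqrt{-3}}}$, and $\eta_{\sigma_i\sigma_{\sqrt{-3}}}(g)=G(\varepsilon_p)G(\varepsilon_q)\,g^{\tau}=G(\varepsilon)\,g^{\tau}$, where the last equality uses the Gauss-sum factorization for the coprime conductors $p$ and $q$. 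Dividing by the appropriate norm element $a_p$, $a_q$ or $a_pa_q$ matches the pseudo-eigenvalues above, so the three claimed identities hold on $g$.

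Finally, because both sides of each equality are endomorphisms of $A_g$ defined over $\overline{\QQ}$ that intertwine with the action of $K_g\subset\End_{\QQ}(A_g)\otimes\QQ$, agreement on $g$ propagates to the other three basis vectors $g^\sigma$ by applying the corresponding element of $\Gal(K_g/\QQ)$, finishing the proof. The main obstacle, as I see it, is bookkeeping: one has to correctly identify which inner twist automorphism corresponds to each Atkin--Li operator (the potential confusion being between $\varepsilon_p$, $\psi_p$ and $\psi_p^{-1}$, and likewise at $q$), and one must verify that the factorization of the global Gauss sum over the coprime moduli $p$ and $q$ is compatible with the multiplicativity $a_{pq}=a_pa_q$ so that item (3) is genuinely the composition of items (1) and (2); once these compatibilities are laid out, the argument is a parallel of the one-prime computation performed twice.
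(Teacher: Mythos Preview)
Your approach is essentially the paper's: compute both $W_P$ and $\left(\eta_\sigma/\beta(\sigma)\right)^*$ on the natural basis $\{g,\sigma_i(g),\sigma_{\sqrt{-3}}(g),\overline g\}$ via Theorem~\ref{thm:AtLi} and \cite[Lemma~2.1]{GL}, and match the pseudo-eigenvalues. Two points of bookkeeping deserve correction. First, the identity $G(\varepsilon_p)G(\varepsilon_q)=G(\varepsilon)$ is false in general: for coprime conductors one has
\[
G(\varepsilon_p\varepsilon_q)=\varepsilon_p(q)\,\varepsilon_q(p)\,G(\varepsilon_p)\,G(\varepsilon_q),
\]
and the paper uses precisely this relation (equation~\eqref{eq:Gaussrelation}); the extra factor is absorbed into the splitting map, which is $\beta(\sigma_i\sigma_{\sqrt{-3}})=a_pa_q\,\psi_p(q)\psi_q(p)$ rather than $a_pa_q$ alone. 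Second, your ``propagation by Galois'' step is not automatic: $W_p$ and $(\eta_{\sigma_i}/a_p)^*$ are only $K_g$-\emph{semilinear} (they intertwine the $K_g$-action with $\sigma_i$), so agreement on a single eigenline does not formally force agreement on all four. The paper avoids this by writing out the action on each of $g,\overline g,h,\overline h$ explicitly, and you should do the same (the four formulas are symmetric and this is no harder than checking on $g$).
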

\begin{proof}
  The proof mimics that of Theorem~\ref{thm:A-Lrelation}. Consider the
  basis of differential forms given by
  $\{g,\overline{g},h, \overline{h}\}$, where
  $h \in S_2(pqm,\overline{\varepsilon_p}{\varepsilon_q})$ equals $\sigma_i(g)$. By
  Theorem~\ref{thm:AtLi}: 
\[
  W_p\, g = \frac{G(\varepsilon_p)}{a_p} \, {h}, \qquad W_p\,
  \overline{g}= \frac{\overline{G({\varepsilon_p})}}{\overline{a_p}}
  \,\overline{h}, \qquad W_p\, h =
  \frac{\overline{G(\varepsilon_p)}}{\overline{a_p}} \, {g}, \qquad
  W_p\, \overline{h}= \frac{{G({\varepsilon_p})}}{{a_p}} \, \overline
  g.
\]
A splitting map is given by
\begin{align}
  \label{eq:splittinggeneralcase}
  \beta(\sigma_i)=a_p, & &\beta(\sigma_{\sqrt{-3}})=a_q,& & \beta(\sigma_i \sigma_{\sqrt{-3}})=a_pa_q \psi_{p}(q) \psi_{q}(p),
\end{align}
By \cite[Lemma 2.1]{GL} we have
\begin{align*}
\left(\frac{\eta_{\sigma_i}}{a_p}\right)^*g = \frac{G(\varepsilon_p)}{a_p} h, & &\left(\frac{\eta_{\sigma_i}}{a_p}\right)^*\overline g = \frac{\overline{G(\varepsilon_p)}}{\overline{a_p}} \overline h, & &  \left(\frac{\eta_{\sigma_i}}{a_p}\right)^*h = \frac{\overline{G(\varepsilon_p)}}{\overline{a_p}} g,\\
\left(\frac{\eta_{\sigma_i}}{a_p}\right)^* \overline{h} = \frac{{G(\varepsilon_p)}}{{a_p}} \overline g.
\end{align*}
The same computations proves the second statement, and the last one
follows from the fact that if $\chi,\chi'$ are two
characters of conductors $N$ and $N'$ with $(N:N')=1$, then
\begin{equation}
  \label{eq:Gaussrelation}
G(\chi \cdot \chi') = \chi(N')\chi'(N) G(\chi) G(\chi').  
\end{equation}
\end{proof}
Then we can split $A_g$ into four pieces over $M$ as in the previous
section.

\medskip

Although we considered only two particular cases, the general
construction follows easily from them. Just split the primes into
three sets: the ones with potentially multiplicative reduction, the
ones with potentially good reduction with characters of order $4$ and
the ones with potentially good reduction with characters of order $3$
or $6$. Treat each set as in Case $1$, and use Case $2$ to mix
them. Note that in any case the abelian surface $A_g$ has dimension
$1$, $2$ or $4$.
\section{Examples}
In this section we show some examples of our construction, which were done using \cite{PARI2}. The
potentially multiplicative case is straightforward since we only have
to find the corresponding quadratic twist and then construct classical
Heegner points.  The potentially good case is a little more involved.
We consider the following two cases:

\noindent $\bullet$ The case where $\ord(\psi_p)=2$
works exactly the same as the previous one, since we only have to find the
quadratic twist. 

\noindent $\bullet$ In the case $\ord(\psi_p)=3, 4$ or $6$ we start by
applying Dokchitser's algorithm \cite{Tim} (see also the appendix in
\cite{Kohen}) to find $\psi_p$ as well as the corresponding Fourier
coefficient $a_{p}$ (which give the $q$-expansion of $g$). We compute
$A_g$ using the Abel-Jacobi map, and then we split it following
Section \ref{subsection:346}.

Each factor is isomorphic to $E$ over $M$.  To find the isomorphism
explicitly, we compare the lattices of $E$ and the one computed and
find one $\alpha \in M$ sending one lattice to the other. 
\begin{table}[h]
\scalebox{0.8}{
\begin{tabular}{||r|c||c|c||c||r|c||c|c||}
  \hline
N& E & St & Ps& $K$ & $\ord(\psi_p)$& $a_p$  & $\tau$ & $P$\\
  \hline
$5^2\cdot 29$ & \href{http://www.lmfdb.org/EllipticCurve/Q/725.a1}{.a1}& $\{5,29\}$ & $\emptyset$& $\QQ(\sqrt{-5})$ & &  & $\frac{45+\sqrt{-45}}{145}$ & $[8,8]$\\
\hline
$5^2\cdot 23$ & \href{http://www.lmfdb.org/EllipticCurve/Q/575.e1}{.e1} & $\{23\}$ & $\{5\}$ & $\QQ(\sqrt{-5})$ &$4$ & $2-i$  & $\frac{15+\sqrt{-5}}{5 \cdot 23}$ & $[\frac{-1637}{2^6}, \frac{-2^8 - 3 \cdot 5^2 \cdot 127
    \sqrt{-5} }{ 2^9} ]$\\
\hline
$2^2\cdot 7^2$ & \href{http://www.lmfdb.org/EllipticCurve/Q/196.b2}{.b2} & $\emptyset$ & $\{7\}$ & $\QQ(\sqrt{-7})$ & $3$ & $\frac{-5+\sqrt{-3}}{2}$ &  $\frac{21+\sqrt{-7}}{7 \cdot 2^3}$ & $[\frac{-139}{4},\frac{581\sqrt{-7}}{8}]$\\
\hline
$2\cdot3^2\cdot7^2$ & \href{http://www.lmfdb.org/EllipticCurve/Q/882.a1}{.a1} & $\{2\}$ & $\{7\}$ & $\QQ(\sqrt{-7})$ & $3$ & $\frac{-1+3\sqrt{-3}}{2}$ &  $\frac{21+\sqrt{-7}}{28}$ & $[39,15]$\\
\hline
$5^2\cdot 7^2$ & \href{http://www.lmfdb.org/EllipticCurve/Q/1225.d2}{.d2} & $\emptyset$ & $\{5\}$ &$\QQ(\sqrt{-35})$ & $4$ & $1-2i$&  $\frac{-35+\sqrt{-35}}{70}$ & $[-15,\frac{15+175\sqrt{-35}}{2}]$\\
               &    &             & $\{7\}$ & & $3$ & $\frac{1-3\sqrt{-3}}{2}$& & \\
\hline
\end{tabular}}
\caption{Examples of ramified primes}
\label{table:curvesdata}
\end{table}

The computations are summarized in Table~\ref{table:curvesdata}. The
table is organized as follows: the first two columns contain the curve
conductor and its label (following \cite{lmfdb} notation). The next two
columns list the principal series and the Steinberg primes of the
curve (following \cite{Pacetti} algorithm). The fifth column contains
the imaginary quadratic field. For the computations we just considered
the whole ring of integers. The sixth and seventh columns contain the
order of the character and the number $a_p$ for the principal series
primes ramifying in $K$. Finally the last two columns show the Heegner
points considered in the upper-half plane and the point constructed in
$E(K)$.

Some remarks regarding the examples considered:
\begin{itemize}
\item The first example corresponds to a potentially multiplicative
  case. The class number of $\mathcal{O}_K$ is $2$ and
  $H=\QQ(\sqrt{5},i)$. If $\chi_5$ denotes the non-trivial character
  of the class group, we can trace with respect to it and get the point
  $[9, \frac{-9+15 \sqrt{5}}{2}] \in E(H)^{\chi_{5}}$.
\item The second and third examples correspond to elliptic curves with
  only one potentially good reduction prime ramifying in $K$. The
  former has $ord(\varepsilon)=2$ while the latter has
  $ord(\varepsilon)=3$
\item The fourth example is quite interesting, since the prime $2$
  splits in $K$ (so we use an Eichler order at $2$), the prime $3$ is
  inert in $K$ (so we use a Cartan order at $3$), and the prime $7$ is
  ramified in $K$. This is a mixed case of the Cartan-Heegner
  hypothesis (as in \cite{Kohen}) and the present one.  We compute the
  $q$-expansion of $g$ (as explained in the aforementioned article) as
  a form in $S_{2}(\Gamma_{0}(2\cdot 7^2) \cap \Gamma_{ns}(3))$ and
  then twist by the character $\psi_7$ (of order $3$) to get a form in
  $S_{2}(\Gamma^{\varepsilon}_{0}(2\cdot 7) \cap \Gamma_{ns}(3))$. The
  results of Section~\ref{subsection:346} apply to give the
  corresponding splitting.
\item The last example corresponds to an elliptic curve with two
  primes of potentially good reduction ramifying in $K$, hence the
  coefficient field is $K_{g}=\QQ(\sqrt{-1},\sqrt{-3})$.
\end{itemize}

\appendix
\section{Computation of a Darmon point (by Marc Masdeu)}

Let $E$ denote the elliptic curve
\cite[\href{http://www.lmfdb.org/EllipticCurve/Q/147.c2}{147.c2}]{lmfdb},
of conductor $3\cdot 7^2$ which has potentially good reduction over
an abelian extension at the prime $7$. Let $K = \QQ(\sqrt{35})$, which
has class number $2$. The prime $3$ is inert in $K$, while $7$
ramifies.  It is easy to see that $\operatorname{sign}(E,K)=-1$.

Let $p = 3$ and consider the Dirichlet character $\chi$ of conductor
$7$ which maps $3\in (\ZZ/7\ZZ)^\times$ to $\zeta_6=e^{\pi i/3}$.  Let
$\Gamma$ denote the group
\[
\Gamma = \Gamma_0^\chi(7)[1/3] = \Big\{\smtx abcd\in \SL_2\left(\ZZ\left[1/3\right]\right)~|~ c \in 7\ZZ[1/3],\ \chi(a) = 1\Big\}.
\]

In the page \url{http://github.com/mmasdeu/} there is code available
to make computations with such groups. 

There is a $2$-dimensional irreducible component in the plus-part of
$H^1(\Gamma_0^\chi(21),\ZZ)$, which corresponds to the abelian surface
$A_g$. Let $\{g_1, g_2\}$ be an integral basis of this subspace,
normalized such that its basis vectors are not multiples of other
integral vectors. Following the constructions of~\cite{MR3384519} with
the non-standard arithmetic groups, each of these vectors yield a
cohomology class
\[
\varphi^{(i)}_E\in H^1(\Gamma, \Omega^1_{\cH_3}),\quad i =1,2.
\]
Here $\cH_3$ denotes the $3$-adic upper half-plane and $\Omega^1_{\cH_3}$ is the module of rigid-analytic differentials with $3$-adically bounded residues.

The ring of integers $\cO_K$ of $K$ embeds into $M_2(\ZZ)$ via
\[
\sqrt{35}\mapsto \psi(\sqrt{35}) = \left(\begin{array}{rr}
15 & 10 \\
-19 & -15
\end{array}\right).
\]

The fundamental unit of $K$ is $u_K=\sqrt{5}+6$, which is mapped to the matrix
\[
\psi(u_K) =
\left(\begin{array}{rr}
21 & 10 \\
-19 & -9
\end{array}\right).
\]

In order to obtain an element of $\Gamma_0^\chi(7)$ we need to consider $u_K^{14}$, which maps to
\[
\gamma_K = \psi(u_K)^{14} = \left(\begin{array}{rr}
-3057309462214237 & -4524404717310744 \\
2852342104391556 & 4221080735198699
\end{array}\right)\in \Gamma_0^\chi(7).
\]

The matrix $\gamma_K$ fixes a point $\tau_K$ in $\cH_3$, 
\[
\tau_K = 680113883076491926203393 + 188920523076803312834276\,\alpha_3 + O(3^{50}),
\]
where  $\alpha_3$ denotes a square root of $35$ in $K_3$, the completion of $K$ at $3$.

We present the above groups using Farey symbols so as to solve the
word problem for them. Although the homology class of
$\gamma_K\tns \tau_K$ might not lie in
$H_1(\Gamma_0^\chi(7),\Div^0\cH_3)$, its projection into the $A_g$
isotypical component is. It can be seen that such projection is given
by the operator $(T_2^2-3T_2+3)(T_2+3)$, where $T_2$ is the $2$-th
Hecke operator (just by computing the characteristic polynomial of the
Hecke operator $T_2$ in the whole space and computing its irreducible
factors). This allows to represent
$(T_2^2 - 3T_2 + 3)(T_2+3)(\gamma_K\tns \tau_K)$ 
by a cycle of the form
\[
\smtx{-6}{1}{-7}{1}\tns D_1 + \smtx{15}{-4}{49}{-13}\tns D_2 + \smtx{1}{1}{0}{1}\tns D_3 + \smtx{22}{-9}{49}{-20}\tns D_4 + \smtx{-13}{5}{-21}{8}\tns D_5,
\]
where $D_i$ are divisors of degree $0$ obtained by the aforementioned
code (each divisor has support consisting of more than a thousand
points in $\cH_3$).

This class was integrated against the cohomology classes
$\varphi^{(i)}_E$ using an overconvergent lift as explained
in~\cite{MR3384519} giving a point in $A_g(\CC_3)$ which can be
projected onto $E(\CC_3)$ by choosing an appropriate linear combination
of the basis elements. In the generic case any projection would
work. We have taken in this case the projection onto
$g_1$. Concretely, the integral corresponding to $\varphi^{(1)}_E$
resulted in the $3$-adic element
\[
J = 2 + (\alpha_3  + 2)\cdot 3 + 3^2 + (2\cdot \alpha_3  + 1)\cdot 3^3 + (\alpha_3  + 1)\cdot 3^5 + (\alpha_3  + 2)\cdot 3^6 + (\alpha_3  + 1)\cdot 3^7 +  \cdots + O(3^{120})
\]
If we apply Tate's uniformization (at $3$) to such point, we obtain a
point in $E(K_3)$ which coincides up to the working precision of
$3^{120}$ with
\[
14\cdot 13\cdot P = 14\cdot 13\cdot \left(\frac{164850\sqrt{7}}{2809} + \frac{610894}{2809}, \frac{63872781\sqrt{35}\sqrt{7}}{297754} + \frac{96772060\sqrt{35}}{148877} - \frac{1}{2} \right).
\]
Note that $P \in E(H)$, where $H=K(\sqrt{7})=\QQ(\sqrt{35},\sqrt{7})$
is the Hilbert class field of $K$ as would be predicted by the
conjectures. The factor $14$ appears because we took the $14$th power
of the fundamental unit, while the factor $13$ is due to the fact that
the point would naturally lie in the elliptic curve
\href{http://www.lmfdb.org/EllipticCurve/Q/147.c1}{147.c1}, which is
$13$-isogenous to $E$.

Finally, if one takes the trace of $P$ to $K$ one obtains:
\[
P_K = P + P^\sigma = \left(\frac{63367}{2000} , \frac{5823153}{200000}\sqrt{35} - \frac 12\right),\quad \Gal(H/K)=\langle \sigma\rangle,
\]
and one can check that $P_K$ is non-torsion and thus generates a subgroup of finite index in $E(K)$.

\bibliographystyle{alpha}
\bibliography{biblio}

\begin{thebibliography}{{LMF}13}

\bibitem[AL78]{Atkin-Li}
A.~O.~L. Atkin and Wen Ch'ing~Winnie Li.
\newblock Twists of newforms and pseudo-eigenvalues of {$W$}-operators.
\newblock {\em Invent. Math.}, 48(3):221--243, 1978.

\bibitem[BD90]{Bertolini-Darmon}
Massimo Bertolini and Henri Darmon.
\newblock Kolyvagin's descent and {M}ordell-{W}eil groups over ring class
  fields.
\newblock {\em J. Reine Angew. Math.}, 412:63--74, 1990.

\bibitem[BD07]{MR2289868}
Massimo Bertolini and Henri Darmon.
\newblock Hida families and rational points on elliptic curves.
\newblock {\em Invent. Math.}, 168(2):371--431, 2007.

\bibitem[BD09]{bertolini2009rationality}
Massimo Bertolini and Henri Darmon.
\newblock The rationality of {S}tark-{H}eegner points over genus fields of real
  quadratic fields.
\newblock {\em Annals of mathematics}, pages 343--370, 2009.

\bibitem[Car86]{Carayol}
Henri Carayol.
\newblock Sur les repr\'esentations {$l$}-adiques associ\'ees aux formes
  modulaires de {H}ilbert.
\newblock {\em Ann. Sci. \'Ecole Norm. Sup. (4)}, 19(3):409--468, 1986.

\bibitem[Dar01]{darmon2001integration}
Henri Darmon.
\newblock Integration on $\mathcal{H}_p \times \mathcal{H}$ and arithmetic
  applications.
\newblock {\em Annals of Mathematics}, pages 589--639, 2001.

\bibitem[Dar04]{Dar04}
Henri Darmon.
\newblock {\em Rational points on modular elliptic curves}, volume 101 of {\em
  CBMS Regional Conference Series in Mathematics}.
\newblock Published for the Conference Board of the Mathematical Sciences,
  Washington, DC; by the American Mathematical Society, Providence, RI, 2004.

\bibitem[DD11]{Tim}
Tim Dokchitser and Vladimir Dokchitser.
\newblock Euler factors determine local {W}eil representations.
\newblock {\em arXiv:1112.4889}, 2011.

\bibitem[DP06]{darmon2006efficient}
Henri Darmon and Robert Pollack.
\newblock Efficient calculation of {S}tark-{H}eegner points via overconvergent
  modular symbols.
\newblock {\em Israel Journal of Mathematics}, 153(1):319--354, 2006.

\bibitem[DRZ12]{darmon2012birch}
Henri Darmon, Victor Rotger, and Yu~Zhao.
\newblock The {B}irch and {S}winnerton-{D}yer conjecture for
  {$\mathbf{Q}$}-curves and oda’s period relations.
\newblock In {\em Proc. Int. Symp. in honor of T. Oda, Series on Number Theory
  and its applications}, volume~7, pages 1--40, 2012.

\bibitem[GL01]{GL}
Josep Gonz{\'a}lez and Joan-C. Lario.
\newblock {$\mathbf{Q}$}-curves and their {M}anin ideals.
\newblock {\em Amer. J. Math.}, 123(3):475--503, 2001.

\bibitem[GM15]{guitart2015elementary}
Xavier Guitart and Marc Masdeu.
\newblock Elementary matrix decomposition and the computation of {D}armon
  points with higher conductor.
\newblock {\em Mathematics of Computation}, 84(292):875--893, 2015.

\bibitem[GM{\c{S}}15]{MR3384519}
Xavier Guitart, Marc Masdeu, and Mehmet~Haluk {\c{S}}eng{\"u}n.
\newblock Darmon points on elliptic curves over number fields of arbitrary
  signature.
\newblock {\em Proc. Lond. Math. Soc. (3)}, 111(2):484--518, 2015.

\bibitem[GM{\c{S}}16]{MR3418066}
Xavier Guitart, Marc Masdeu, and Mehmet~Haluk {\c{S}}eng{\"u}n.
\newblock Uniformization of modular elliptic curves via {$p$}-adic periods.
\newblock {\em J. Algebra}, 445:458--502, 2016.

\bibitem[{GP }14]{PARI2}
{GP The PARI~Group}, Bordeaux.
\newblock {\em {PARI/GP version {\tt 2.7.0}}}, 2014.
\newblock available from \url{http://pari.math.u-bordeaux.fr/}.

\bibitem[Gre09]{MR2510743}
Matthew Greenberg.
\newblock Stark-{H}eegner points and the cohomology of quaternionic {S}himura
  varieties.
\newblock {\em Duke Math. J.}, 147(3):541--575, 2009.

\bibitem[Gro91]{Gr89}
Benedict~H. Gross.
\newblock Kolyvagin's work on modular elliptic curves.
\newblock In {\em {$L$}-functions and arithmetic ({D}urham, 1989)}, volume 153
  of {\em London Math. Soc. Lecture Note Ser.}, pages 235--256. Cambridge Univ.
  Press, Cambridge, 1991.

\bibitem[GZ86]{GZ}
Benedict~H. Gross and Don~B. Zagier.
\newblock Heegner points and derivatives of {$L$}-series.
\newblock {\em Invent. Math.}, 84(2):225--320, 1986.

\bibitem[KP15]{Kohen}
Daniel Kohen and Ariel Pacetti.
\newblock Heegner points on {C}artan non-split curves.
\newblock {\em To appear in Canadian Journal of Mathematics}, 2015.

\bibitem[{LMF}13]{lmfdb}
The {LMFDB Collaboration}.
\newblock The {L}-functions and modular forms database.
\newblock \url{http://www.lmfdb.org}, 2013.
\newblock [Online; accessed 16 September 2013].

\bibitem[Pac13]{Pacetti}
Ariel Pacetti.
\newblock On the change of root numbers under twisting and applications.
\newblock {\em Proc. Amer. Math. Soc.}, 141(8):2615--2628, 2013.

\bibitem[Rib77]{RibetMF1V}
Kenneth~A. Ribet.
\newblock Galois representations attached to eigenforms with {N}ebentypus.
\newblock In {\em Modular functions of one variable, {V} ({P}roc. {S}econd
  {I}nternat. {C}onf., {U}niv. {B}onn, {B}onn, 1976)}, pages 17--51. Lecture
  Notes in Math., Vol. 601. Springer, Berlin, 1977.

\bibitem[Rib80]{Ribet2}
Kenneth~A. Ribet.
\newblock Twists of modular forms and endomorphisms of abelian varieties.
\newblock {\em Math. Ann.}, 253(1):43--62, 1980.

\bibitem[Rib04]{Ribet}
Kenneth~A. Ribet.
\newblock Abelian varieties over {$\bf Q$} and modular forms.
\newblock In {\em Modular curves and abelian varieties}, volume 224 of {\em
  Progr. Math.}, pages 241--261. Birkh\"auser, Basel, 2004.

\bibitem[TZ03]{tian2003euler}
Ye~Tian and Shou-wu Zhang.
\newblock {\em Euler system of CM-points on Shimura curves}.
\newblock preparation, 2003.

\bibitem[YZZ13]{yuan2013gross}
Xinyi Yuan, Shou-Wu Zhang, and Wei Zhang.
\newblock {\em The {G}ross-{Z}agier formula on {S}himura curves}, volume 184 of
  {\em Annals of Mathematics Studies}.
\newblock Princeton University Press, Princeton, NJ, 2013.

\bibitem[Zha01]{ZH}
Shou-Wu Zhang.
\newblock Gross-{Z}agier formula for {${\rm GL}_2$}.
\newblock {\em Asian J. Math.}, 5(2):183--290, 2001.

\bibitem[Zha10]{zhang2010arithmetic}
Shou-Wu Zhang.
\newblock Arithmetic of {S}himura curves.
\newblock {\em Sci. China Math.}, 53(3):573--592, 2010.

\end{thebibliography}
\end{document}